\providecommand{\U}[1]{\protect\rule{.1in}{.1in}}
\providecommand{\U}[1]{\protect\rule{.1in}{.1in}}
\providecommand{\U}[1]{\protect\rule{.1in}{.1in}}
\providecommand{\U}[1]{\protect\rule{.1in}{.1in}}
\newtheorem{theorem}{Theorem}[section]
\newtheorem{corollary}[theorem]{Corollary}
\newtheorem{definition}[theorem]{Definition}
\newtheorem{example}[theorem]{Example}
\newtheorem{lemma}[theorem]{Lemma}
\newtheorem{proposition}[theorem]{Proposition}
\newtheorem{remark}[theorem]{Remark}
\def\remove#1{}
\def\path{\mathop{\hbox{\rm Path}}}
\DeclareMathOperator{\Soc}{Soc}
\tikzstyle{vertex}=[circle, draw, fill, inner sep=0pt, minimum size=6pt]
\begin{document}
\title{Graded Naimark's Problem for Leavitt Path Algebras}
\author{Kulumani M. Rangaswamy}
\address{Department of Mathematics, University of Colorado, Colorado Springs, CO.
80918, USA}
\email{kmranga@gmail.com}
\author{Ashish K. Srivastava}
\address{Department of Mathematics and Statistics, Saint Louis University, St. Louis,
MO-63103, USA}
\email{ashish.srivastava@slu.edu}

\begin{abstract}
In this paper we study the graded version of Naimark's problem for Leavitt
path algebras considering them as $\mathbb{Z}$-graded algebras. Several
characterizations are obtained of a Leavitt path algebra $L$ of an arbitrary
graph $E$ over a field $\mathbb{K}$ over which any two graded-simple modules
are graded isomorphic. Such a Leavitt path algebra $L$ is shown to be graded
isomorphic to the algebra of graded infinite matrices having at most finitely
many non-zero entries from the ring $R$ where $R=\mathbb{K}$ or $R=\mathbb{K}%
[x,x^{-1}]$. Equivalently, $L$ is a graded-simple ring which is
graded-semisimple, that is, $L$ is a graded direct sum of graded-isomorphic
graded-simple left $L$-modules. Graphically, the graph $E$ is shown to be
row-finite, downward directed and the vertex set $E^{0}$ is the hereditary
saturated closure of a single vertex $v$ which is either a line point or lies
on a cycle without exits. We also characterize Leavitt path algebras
possessing at most countably many isomorphism classes of graded-simple left
modules. Examples are constructed illustrating these results.

\end{abstract}
\maketitle

\section{Introduction}

\noindent Let $\mathcal{K(H})$ denote the C*-algebra of compact operators on a
complex, not necessarily separable, Hilbert space $\mathcal{H}$. In 1948,
Naimark proved (see \cite{Naimark1}) that any two irreducible representations
of $\mathcal{K}(\mathcal{H})$ are unitarily equivalent and in \cite{Naimark2},
he asked whether this property characterizes the algebra $\mathcal{K}%
(\mathcal{H})$, that is, if $A$ is a C*-algebra with only one irreducible
representation up to unitary equivalence, should $A$ be isomorphic to some
$\mathcal{K}(\mathcal{H})$. \ Due to its potential impact in the
classification of C*-algebras, this problem attracted the attention of several
researchers whose efforts only lead to several partial solutions (see
\cite{ST} for some details). Specifically, Kaplansky (\cite{Kaplansky}) showed
that Naimark's question has a positive answer for Type I C*-algebras and in
1953, Rosenberg (\cite{Rosenberg}) proved that Naimark's problem has an
affirmative answer for separable C*-algebras. In spite of the attempts by many
researchers such as Diximier \cite{D-0}, Fell \cite{F}, Glimm \cite{GL} and
others, Naimark's question remained unsolved during the following 50
years.\ At last in 2004, Akemann and Weaver (\cite{AW}) used Jensen's diamond
axiom, which is a combinatorial principle independent of ZFC, to construct an
$\aleph_{1}$-generated C*-algebra as a counter example to Naimark's problem.
In particular, their work showed that \ a solution to Naimark's problem cannot
be obtained using ZFC alone. In view of the set-theoretic obstructions to
solve the Naimark's problem in general, it makes sense to consider his problem
for restricted classes of C*-algebras. Accordingly, Suri and Tomforde
\cite{ST} considered the case of graph C*-algebras and showed that the
Naimark's question has a affirmative answer for a graph C*-algebra $A$
provided $A$ is an AF-algebra. They left the question open whether Naimark's
question can be answered for other types of graph C*-algebras. Recently, the
first named author and Tomforde \cite{RT} have shown that Naimark's problem
indeed has an affirmative answer for arbitrary graph $C^{\ast}$-algebras and
they have also shown that the algebraic analogue of Naimark's problem has a
positive solution for Leavitt path algebras of arbitrary graphs.

Leavitt path algebras of an arbitrary graph over a field admit a natural
$\mathbb{Z}$-grading. In this paper, we consider the graded version of the
Naimark's problem for Leavitt path algebras. We first note that, in the
context of an associative algebra $A$, irreducible representations of $A$ can
be realized as simple left $A$-modules and that unitary equivalence of
irreducible representations is equivalent to isomorphisms of simple modules.
We prove the following main theorem:

\begin{theorem}
Suppose $L=L_{\mathbb{K}}(E)$ is a Leavitt path algebra of an arbitrary graph
$E$ over a field $\mathbb{K}$. Then the following properties are equivalent
for $L$:

(a) Any two graded-simple left $L$-modules are graded-isomorphic.

(b) The graph $E$ is row-finite, downward directed and $E^{0}$ is the
hereditary saturated closure of a single vertex $v$ which is either a line
point, or lies on a cycle without exits in $E$.

(c) $L\cong_{gr}M_{\Lambda}(\mathbb{K})(\bar{p})$ or $L\cong_{gr}M_{\Upsilon
}(\mathbb{K}[x^{m},x^{-m}])(\bar{q})$ with appropriate matrix gradings, where
$\Lambda$, $\Upsilon$ are suitable index sets.

\end{theorem}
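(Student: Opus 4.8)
The plan is to prove the theorem as a cycle of implications $(a)\Rightarrow(b)\Rightarrow(c)\Rightarrow(a)$, exploiting the standard dictionary between graph-theoretic properties of $E$ and module-theoretic properties of $L_{\mathbb K}(E)$. The conceptual engine is that graded-simple left $L$-modules correspond to what one expects from the ungraded Chen-module theory, and the key structural facts I would invoke are: (i) line points and cycles without exits are exactly the vertices producing graded-simple modules of the two distinct isomorphism flavors (the $\mathbb K$-flavor and the $\mathbb K[x^m,x^{-m}]$-flavor); and (ii) graded ideals of $L$ are in inclusion-preserving bijection with hereditary saturated subsets of $E^0$.

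\medskip

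\noindent\emph{The implication $(a)\Rightarrow(b)$.} First I would argue that the existence of \emph{any} graded-simple module forces $E$ to have either a line point or a cycle without exits, since these are the sources of the two families of graded-simple modules. The hypothesis that all graded-simple modules are mutually graded-isomorphic is a very strong homogeneity condition. I would translate it into the statement that $L$ has no proper nonzero graded two-sided ideals, i.e.\ $L$ is graded-simple: if there were a proper nonzero graded ideal $I$, then $L/I$ and $I$ would carry graded-simple modules that cannot be isomorphic (one is annihilated by $I$, the other is not), contradicting $(a)$. Graded-simplicity of $L$ is equivalent, via the hereditary-saturated-subset correspondence together with the cycle condition, to $E$ being such that the only hereditary saturated subsets are $\emptyset$ and $E^0$ and $E$ satisfies Condition~(K) or the appropriate cofinality condition. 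From graded-simplicity plus $(a)$ I would extract that $E$ is cofinal (downward directed) and that $E^0$ is the hereditary saturated closure of a single such special vertex $v$; row-finiteness should come from ruling out infinite emitters, which would otherwise produce incompatible (or non-graded-simple) module behavior.

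\medskip

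\noindent\emph{The implication $(b)\Rightarrow(c)$.} This is the concrete matrix-realization step. Given the graph description in $(b)$, I would split into the two cases according to whether $v$ is a line point or lies on a cycle $c$ without exits. In the line-point case, downward directedness and the hereditary-saturated-closure condition should force $E$ to be (graded-isomorphic to) a graph whose Leavitt path algebra is the graded matrix algebra $M_\Lambda(\mathbb K)(\bar p)$ over the trivially graded field, with the shift vector $\bar p$ recording the lengths of paths from the various vertices into $v$. In the cycle case, the cycle without exits of length $m$ contributes the Laurent polynomial ring $\mathbb K[x^m,x^{-m}]$ as the ``diagonal'' coefficient ring, and the surrounding tree of paths feeding into the cycle produces the matrix structure $M_\Upsilon(\mathbb K[x^m,x^{-m}])(\bar q)$. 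I expect this step to rest on a direct computation using the universal property / normal-form of elements of $L_{\mathbb K}(E)$ and the explicit graded basis given by paths.

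\medskip

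\noindent\emph{The implication $(c)\Rightarrow(a)$ and the main obstacle.} For the converse I would verify directly that each of the two model algebras has a unique graded-isomorphism class of graded-simple modules: a graded matrix algebra $M_\Lambda(R)(\bar p)$ over $R=\mathbb K$ or $R=\mathbb K[x^m,x^{-m}]$ is graded-simple and graded-semisimple, being a graded direct sum of mutually graded-isomorphic graded-simple columns, exactly as asserted in the abstract. The graded Morita-type reduction then identifies the graded-simple modules with the graded-simple modules over the coefficient ring $R$, of which there is a unique graded-isomorphism class. The main obstacle I anticipate is the \emph{careful bookkeeping of the gradings} throughout: the shift vectors $\bar p,\bar q$ must be pinned down so that the graded matrix algebra is genuinely graded-isomorphic to $L$, and in the cycle case one must correctly account for the period $m$ appearing both in the coefficient ring $\mathbb K[x^m,x^{-m}]$ and in the grading shifts. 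Establishing that two graded-simple modules built from paths of different $\mathbb Z$-degrees are nonetheless \emph{graded}-isomorphic (after an allowable degree shift) is the delicate point where graded — as opposed to ungraded — isomorphism is essential, and getting the index sets $\Lambda,\Upsilon$ and the shift data exactly right is where I expect most of the real work to lie.
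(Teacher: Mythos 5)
Your implications (b)$\Rightarrow$(c) and (c)$\Rightarrow$(a) are sound and match the paper: the matrix realization is exactly Theorem \ref{Generalized Hazrat} (i.e.\ Theorem 6.7 of \cite{HR}, the row-finite extension of Hazrat's finite-graph theorem, so the content there is a citation rather than a fresh computation), and the converse via graded-semisimplicity of $M_{\Lambda}(\mathbb{K})(\bar{p})$ and $M_{\Upsilon}(\mathbb{K}[x^{m},x^{-m}])(\bar{q})$ is the paper's own argument through Theorem \ref{graded socle}. Your reduction of (a) to graded-simplicity of $L$ is also essentially the paper's first step (the paper routes it through $J_{gr}(L)=0$ and annihilators of graded-simple modules rather than through a graded-simple module carried by $I$ itself, but the idea is the same). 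The genuine gap is in (a)$\Rightarrow$(b), and it sits exactly where the paper does its real work. Your opening claim --- that the existence of \emph{any} graded-simple module already forces $E$ to contain a line point or a cycle without exits --- is false: graded-simple modules exist over \emph{every} Leavitt path algebra ($L/M$ for $M$ a maximal graded left ideal, which exists by Zorn's lemma, as the paper notes at the start of Section 4), and, for instance, the rose with two petals has many graded-simple Chen modules $V_{[p]}$ attached to irrational paths but has no line points and no cycles without exits. Consequently your sketch never actually produces the special vertex $v$; the sentence ``from graded-simplicity plus (a) I would extract\dots'' is precisely the missing argument, not a consequence of what precedes it.

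What is actually needed, and what the paper supplies, is a path-construction argument: assuming every cycle has an exit and $E$ has no sinks, every vertex lies on an infinite irrational path, each such path yields a graded-simple module by Theorems \ref{Chen} and \ref{HR}, and hypothesis (a) forces all irrational paths into a single tail-equivalence class; the paper then writes $p=\gamma q$ with no vertex of $q$ on a closed path (Lemma \ref{Disjoint cycles}) and iterates Lemma \ref{cofinal} to build a second irrational path $q'=\alpha_{1}\alpha_{2}\cdots$ not tail-equivalent to $q$, a contradiction that forces a line point. The same device --- manufacturing two non-tail-equivalent irrational paths from infinitely many closed paths based at an infinite emitter --- is how the paper proves row-finiteness, where you only gesture at ``incompatible module behavior.'' A smaller but real error: Condition (K) is irrelevant here (it governs when \emph{all} ideals are graded, not graded-simplicity); graded ideals are already in bijection with the admissible pairs $(H,S)$, so graded-simplicity needs only the triviality of hereditary saturated subsets. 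Without the irrational-path construction, your proof of (a)$\Rightarrow$(b) does not go through.
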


We also completely describe the Leavitt path algebras $L$ having at most
countably many non-isomorphic graded-simple modules. In this case, $L$ is
shown to have a graded-socular chain of countable length, that is, $L$ is the
union of a smooth well-ordered ascending chain of graded ideals
\[
0\subseteq I_{1}\subseteq\cdot\cdot\cdot\subseteq I_{\alpha}\subseteq
I_{\alpha+1}\subseteq\cdot\cdot\cdot\qquad\qquad(\alpha<\tau)
\]
where $\tau$ is a countable ordinal and for each $0\leq\alpha<\tau$,
$I_{\alpha+1}/I_{\alpha}\cong_{gr}(\oplus_{i\in X}M_{\Lambda_{i}}%
(\mathbb{K})(\bar{\delta}_{i}))\oplus(\oplus_{j\in Y}M_{\Upsilon_{j}%
}(\mathbb{K}[x^{m_{j}},x^{-m_{j}}])(\bar{\sigma}_{j}))$ with appropriate
matrix gradings, where $\Lambda_{i},\Upsilon_{j}$ are suitable index sets and
$X,Y $ are at most countable. Several graphical examples are given
illustrating these results.

\section{Preliminaries}

\noindent We begin this section by recalling some useful notions of graph
theory. We refer to \cite{AAS} for the general results, notation and
terminology in Leavitt path algebras. A (\textit{directed}) \textit{graph} is
a quadruplet $E=(E^{0},E^{1},s,r)$ which consists of two disjoint sets $E^{0}$
and $E^{1}$, called the set of \emph{vertices} and the set of \emph{edges}
respectively, together with two maps $s,r:E^{1}\longrightarrow E^{0}$. The
vertices $s(e)$ and $r(e)$ are referred to as the \emph{source} and the
\emph{range} of the edge~$e$, respectively. A vertex~$v$ for which $s^{-1}(v)$
is empty is called a \emph{sink}; a vertex~$v$ is \emph{regular} if
$0<|s^{-1}(v)|<\infty$; a vertex~$v$ is an \textit{infinite emitter} if
$|s^{-1}(v)|=\infty$; and a vertex is \textit{singular} if it is either a sink
or an infinite emitter.

In this paper, all the graphs\ $E$ that we consider are arbitrary in the sense
that there are no restrictions on the the number of vertices and \ edges in
$E$ and no restriction on the number of edges emitted by a single vertex. The
graph $E$ is said to be \textit{row-finite}, if every vertex in $E$ is a
regular vertex.

A \emph{finite path of length} $n$ in a graph $E$ is a sequence $p = e_{1}
\cdots e_{n}$ of edges $e_{1}, \dots, e_{n}$ such that $r(e_{i}) = s(e_{i+1})
$ for $i = 1, \dots, n-1$. In this case, we say that the path~$p$ starts at
the vertex $s(p) := s(e_{1})$ and ends at the vertex $r(p) := r(e_{n})$, we
write $|p| = n$ for the length of $p$. We consider the elements of $E^{0}$ to
be paths of length $0$. We denote by $E^{*}$ the set of all finite paths in
$E$. An edge $f$ is an \textit{exit} for a path $p= e_{1} \cdots e_{n}$ if
$s(f) = s(e_{i})$ but $f \neq e_{i}$ for some $1\le i\le n$. A finite path $p$
of positive length is called a \textit{closed path based at} $v$ if $v = s(p)
= r(p)$. A \textit{cycle} is a closed path $p = e_{1} \cdots e_{n}$, and for
which the vertices $s(e_{1}), s(e_{2}), \hdots, s(e_{n})$ are distinct. A
closed path $c$ in $E$ is called \textit{simple} if $c \neq d^{n}$ for any
closed path $d$ and integer $n\ge2$.

\begin{definition}
\label{LPA} For an arbitrary graph $E = (E^{0},E^{1},s,r)$ and any field
$\mathbb{K}$, the \emph{Leavitt path algebra} $L_{\mathbb{K}}(E)$ \textit{of
the graph}~$E$ \emph{with coefficients in}~$\mathbb{K}$ is the $\mathbb{K}%
$-algebra generated by the union of the set $E^{0}$ and two disjoint copies of
$E^{1}$, say $E^{1}$ and $\{e^{*}\mid e\in E^{1}\}$, satisfying the following
relations for all $v, w\in E^{0}$ and $e, f\in E^{1}$:

\begin{itemize}
\item[(1)] $vw=\delta_{v,w}w$.

\item[(2)] $s(e) e = e = e r(e)$ and $e^{*}s(e) = e^{*} = r(e) e^{*}$;

\item[(3)] $e^{*} f = \delta_{e, f} r(e)$;

\item[(4)] $v= \sum_{e\in s^{-1}(v)}ee^{*}$ for any regular vertex $v$;
\end{itemize}

where $\delta$ is the Kronecker delta.
\end{definition}

If $E^{0}$ is finite, then $L_{\mathbb{K}}(E)$ is a unital ring having
identity $1 = \sum_{v\in E^{0}}v$. It is easy to see that the mapping given by
$v\longmapsto v$ for all $v\in E^{0}$, and $e\longmapsto e^{*}$,
$e^{*}\longmapsto e$ for all $e\in E^{1}$, produces an involution on the
algebra $L_{\mathbb{K}}(E)$, and for any path $p = e_{1}e_{2}\cdots e_{n}$,
the element $e^{*}_{n}\cdots e^{*}_{2}e^{*}_{1}$ of $L_{K}(E)$ is denoted by
$p^{*}$. It can be shown that $L_{\mathbb{K}}(E)$ is spanned as a $\mathbb{K}%
$-vector space by $\{pq^{*} \mid p, q\in E^{*}, r(p) = r(q)\}$. Indeed,
$L_{\mathbb{K}}(E)$ is a $\mathbb{Z}$-graded $\mathbb{K}$-algebra:
$L_{\mathbb{K}}(E) = \oplus_{n\in\mathbb{Z}}L_{\mathbb{K}}(E)_{n}$, where for
each $n\in\mathbb{Z}$, the degree $n$ component $L_{\mathbb{K}}(E)_{n}$ is the
set $\text{span}_{\mathbb{K}} \{pq^{*}\mid p, q\in E^{*}, r(p) = r(q), |p|-
|q| = n\}$. Also, $L_{\mathbb{K}}(E)$ has the following property: if
$\mathcal{A}$ is a $\mathbb{K}$-algebra generated by a family of elements
$\{a_{v}, b_{e}, c_{e^{*}}\mid v\in E^{0}, e\in E^{1}\}$ satisfying the
relations analogous to (1) - (4) in Definition~\ref{LPA}, then there exists a
unique $\mathbb{K}$-algebra homomorphism $\varphi: L_{\mathbb{K}%
}(E)\longrightarrow\mathcal{A}$ given by $\varphi(v) = a_{v}$, $\varphi(e) =
b_{e}$ and $\varphi(e^{*}) = c_{e^{*}}$. We will refer to this property as the
Universal Property of $L_{\mathbb{K}}(E)$.

\begin{definition}
Let $E$ be an arbitrary graph and $H\subseteq E^{0}$.

(i) $H$ is called a\textit{\ }\textbf{hereditary set }if whenever $u\in H$ and
there is a path from $u$ to a vertex $v$ (in symbols, $u\geq v$) then $v\in H$.

(ii) $H$ is called a \textbf{saturated set }if, for any regular vertex $v$,
$r(s^{-1}(v))\subseteq H$ implies that $v\in H$.

(iii) If $X$ is a non-empty subset of $E^{0}$, the \textbf{hereditary
saturated closure} of $X$ is the intersection of all the hereditary saturated
subsets of $E^{0}$ (and thus the smallest hereditary saturated subset)
containing $X$.
\end{definition}

\noindent If $v\in E^{0}$, the \textbf{tree of }$v$ is the set $T(v)=\{w\in
E^{0}:v\geq w\}$. We say a vertex $v$ has\textbf{\ bifurcation }or $v$ is a
\textbf{bifurcation vertex} if $v$ emits two or more edges. For any vertex
$v\in E$, the\textbf{\ }\textit{tree} $T(v)=\{w\in E^{0}:v\geq w\}$. A vertex
$v$ is called a \textbf{line point }if $T(v)$ contains no bifurcating vertices
and no cycles. Thus $T(v)$, when we add all the relevant edges, gives rise to
a straight line segment shown as below
\[
\bullet_{v=v_{1}} \longrightarrow\bullet_{v_{2}}\longrightarrow\bullet_{v_{3}%
}\longrightarrow\bullet\longrightarrow\bullet\cdots
\]
In particular, a sink is a line point. A vertex $v$ is called a\textit{\ }%
\textbf{Laurent vertex} if $T(v)$ is the set of all vertices on a single path
$\gamma=\mu c$ with $s(\gamma)=v$, where $\mu$ is a path without bifurcations
and $c$ is a cycle without exits.

The next lemma describes the explicit construction of the hereditary saturated
closure of a set of vertices.

\begin{lemma}
\label{SaturatedClosure}$($Lemma 2.0.7, \cite{AAS}$)$Let $X$ is a non-empty
subset of vertices. Then its hereditary saturated closure $\overline{X}%
=\cup_{n<\omega}X_{n}$, where $X_{n}$ is defined inductively as follows:
\[
X_{0}=\cup_{v\in X}T(v).
\]
If $X_{k}$ has been defined for some $k\geq0$, the define $X_{k+1}=X_{k}%
\cup\{v\in E^{0}:v$ is regular and $r(s^{-1}(v))\subset X_{k}\}$.
\end{lemma}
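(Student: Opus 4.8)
The plan is to set $Y=\cup_{n<\omega}X_{n}$ and prove the two inclusions $\overline{X}\subseteq Y$ and $Y\subseteq\overline{X}$ separately. For the first I would verify that $Y$ is itself a hereditary saturated set containing $X$; since $\overline{X}$ is by definition the smallest such set, this forces $\overline{X}\subseteq Y$. For the second I would show that $Y$ is contained in every hereditary saturated set $H$ with $X\subseteq H$, which by the same definition gives $Y\subseteq\overline{X}$. Throughout I would exploit that the recursive definition produces an ascending chain $X_{0}\subseteq X_{1}\subseteq\cdots$.

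To see that $Y$ is hereditary, I would argue by induction on $n$ that each $X_{n}$ is hereditary; a union of an ascending chain of hereditary sets is then hereditary. The base case is immediate: if $u\in T(v)$ for some $v\in X$ and $u\geq w$, then $v\geq w$, so $w\in T(v)\subseteq X_{0}$. For the inductive step, take $u\in X_{k+1}$ with $u\geq w$ and $w\neq u$. If $u\in X_{k}$ we are done by the inductive hypothesis; otherwise $u$ is regular with $r(s^{-1}(u))\subseteq X_{k}$, and the first edge $e$ of a path from $u$ to $w$ satisfies $r(e)\in r(s^{-1}(u))\subseteq X_{k}$ with $r(e)\geq w$, so $w\in X_{k}$ again by the inductive hypothesis. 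That $X\subseteq X_{0}\subseteq Y$ is clear since $v\in T(v)$ for every vertex $v$.

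For saturation, suppose $v$ is a regular vertex with $r(s^{-1}(v))\subseteq Y$. Because $v$ is regular, $s^{-1}(v)$ is finite, hence $r(s^{-1}(v))$ is a finite subset of the ascending union $\cup_{n}X_{n}$ and therefore lies inside a single $X_{k}$. The recursion then places $v\in X_{k+1}\subseteq Y$, so $Y$ is saturated. Finally, to establish $Y\subseteq H$ for an arbitrary hereditary saturated $H\supseteq X$, I would again induct on $n$: heredity of $H$ gives $X_{0}\subseteq H$, and if $X_{k}\subseteq H$ then any $v\in X_{k+1}\setminus X_{k}$ is regular with $r(s^{-1}(v))\subseteq X_{k}\subseteq H$, whence saturation of $H$ yields $v\in H$.

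The one genuinely delicate point is the saturation step, together with the implicit claim that the countable union $\cup_{n<\omega}X_{n}$ — rather than a longer transfinite iteration — already closes up. This rests entirely on regularity: a regular vertex emits only finitely many edges, so its finite range set cannot be spread across cofinally many stages of the chain and must already appear at some finite level $X_{k}$. Were infinite emitters permitted to trigger the saturation condition, a finite stopping level could fail and one would be forced beyond $\omega$; the finiteness built into the notion of a regular vertex is precisely what keeps the construction within $\omega$ steps.
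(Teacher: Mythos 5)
Your proof is correct; note that the paper itself states this lemma without proof, citing Lemma 2.0.7 of \cite{AAS}, and your argument is essentially the standard one given there: show $\cup_{n<\omega}X_{n}$ is hereditary (by induction on $n$), saturated (using that a regular vertex emits only finitely many edges, so $r(s^{-1}(v))$ lands in a single $X_{k}$), and contained in every hereditary saturated set containing $X$. Your closing observation --- that the finiteness in the definition of a regular vertex is exactly what makes $\omega$ stages suffice, whereas allowing infinite emitters to trigger saturation could force a transfinite iteration --- correctly identifies the one delicate point of the construction.
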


The following remark will be used in the proof of our main theorem..

\begin{remark}
\label{vertex in sat closure}From Lemma \ref{SaturatedClosure}, it is then
clear that if $\overline{X}$ is the saturated closure of a hereditary subset
$X$ of $E^{0}$, then every vertex $u\in\overline{X}\backslash X$ is a regular vertex.
\end{remark}

\begin{definition}
A graded left module $M$ over a graded ring $R$ is called a
\textbf{graded-simple module} if $M$ has no non-zero proper graded submodules.
\end{definition}

\noindent For example, in the usual $\mathbb{Z}$-grading (with $deg(x)=1$,
$deg(x^{-1})=-1$), $\mathbb{K}[x,x^{-1}]$ is graded-simple as a $\mathbb{K}%
[x,x^{-1}]$-module but is not simple.

First, we recall the constructions of some of the graded-simple left
$L_{\mathbb{K}}(E)$-modules obtained by using the vertices and infinite paths
in the graph $E$ and list two useful theorems from \cite{AR}, \cite{C},
\cite{HR}.

Two infinite paths $p=e_{1}e_{2}\cdot\cdot\cdot e_{n}\cdot\cdot\cdot$ and
$q=f_{1}f_{2}\cdot\cdot\cdot f_{n}\cdot\cdot\cdot$ in a graph $E$ are said to
be \textit{\ }\textbf{tail-equivalent}, if there exist positive integers $m,n
$ such that $e_{m+i}=f_{n+i}$ for all $i\geq0$. Tail-equivalence among
infinite paths is an equivalence relation and the equivalence class of all
paths tail-equivalent to an infinite path $p$ is denoted by $[p]$.

An infinite path $p$ is said to a \textbf{rational path}\textit{\ }if it is
tail equivalent to an infinite path $q=ccc\cdot\cdot\cdot$, where $c$ is a
closed path. An infinite path which is not rational is called an
\textbf{irrational path}.

\begin{theorem}
\label{Chen} \cite{C} Let $E$ be an arbitrary graph and let $L=L_{\mathbb{K}%
}(E) $.

1. Given an infinite path $p$ in $E$, let $V_{[p]}$ denote the vector space
over the field $\mathbb{K}$ having the set $\{q\in\lbrack p]\}$ as a basis.

(a)Then $V_{[p]}$ becomes a left $L$-module induced by the following action on
each path $q\in\lbrack p]$:

\qquad(i) For all vertices $u\in E^{0}$, $u\cdot q=q$ or $0$ according as
$u=s(q)$ or not.

\qquad(ii) For all edges $e$ in $E$, $e\cdot q=eq$ if $r(e)=s(q)$, otherwise
$e\cdot q=$ $0$; and $e^{\ast}\cdot q=q_{1}$ if $q=eq_{1}$, otherwise,
$e^{\ast}\cdot q=0$.

(b) $V_{[p]}$ is a simple $L$-module.

(c) Given two infinite paths $p,q$, $V_{[p]}\cong V_{[q]}$ \ if and only if
$p,q$ are tail-equivalent.

2. Suppose $w$ is a sink in $E$. Let $N_{w}$ denote the $\mathbb{K}$-vector
space having as a basis the set $\{p:p$ path in $E$ with $r(p)=w\}$. Then
$N_{w}$ becomes a simple left $L$-module under action of $L$ as defined above
in 1(a).
\end{theorem}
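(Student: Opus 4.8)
The plan is to treat the four parts in turn, leaning on the Universal Property of $L_{\mathbb{K}}(E)$ throughout.

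For part 1(a) I would not verify the module axioms directly. Instead, for each generator of $L$ I define a $\mathbb{K}$-linear endomorphism of $V_{[p]}$ by the stated formulas on the basis $[p]$: to $v\in E^{0}$ assign $q\mapsto\delta_{v,s(q)}q$, to $e\in E^{1}$ assign $q\mapsto eq$ (when $r(e)=s(q)$, else $0$), and to $e^{*}$ assign $q\mapsto q_{1}$ (when $q=eq_{1}$, else $0$). Each is well defined, since it sends a basis vector either to a basis vector in $[p]$ or to $0$. The work is then to check that these endomorphisms satisfy the defining relations (1)--(4); I would verify each on an arbitrary basis path $q$, the only delicate one being (4), where for a regular vertex $v$ the finite sum $\sum_{e\in s^{-1}(v)}ee^{*}$ picks out exactly the first edge of $q$ when $v=s(q)$ and vanishes otherwise. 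By the Universal Property these assignments extend to an algebra homomorphism $L\to\operatorname{End}_{\mathbb{K}}(V_{[p]})$, which is the desired module structure.

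For part 1(b) the strategy is: (i) every nonzero submodule contains a basis vector, and (ii) any single basis vector generates $V_{[p]}$. For (ii), given $q,q'\in[p]$, tail-equivalence lets me write $q=\beta t$ and $q'=\alpha t$ for a common tail $t$ and finite paths $\alpha,\beta$; then $(\alpha\beta^{*})\cdot q=q'$, so the cyclic module on $q$ is everything. For (i), take $0\neq x=\sum_{i=1}^{n}k_{i}q_{i}$ with distinct $q_{i}\in[p]$ and reduce $n$ to $1$: let $C$ be the longest common prefix of all the $q_{i}$ (finite, since the $q_{i}$ are distinct), so that applying the ghost path $C^{*}$ replaces each $q_{i}$ by $C^{*}q_{i}$, still distinct but now not all sharing a first edge. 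Stripping one further edge $f^{*}$, where $f$ is the first edge of $C^{*}q_{1}$, then annihilates at least one term while keeping $C^{*}q_{1}$, so the number of terms strictly drops. Iterating produces a nonzero scalar multiple of a single basis vector lying in the submodule.

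Part 1(c) forward is immediate: tail-equivalent $p,q$ satisfy $[p]=[q]$, whence literally $V_{[p]}=V_{[q]}$. For the converse I would use a stripping-plus-pigeonhole argument. Given an isomorphism $\theta\colon V_{[p]}\to V_{[q]}$, write $p=e_{1}e_{2}\cdots$ and $\theta(p)=\sum_{i=1}^{n}k_{i}r_{i}$ with $r_{i}\in[q]$. For each $k$, applying $(e_{1}\cdots e_{k})^{*}$ to $\theta(p)$ equals $\theta$ of the nonzero $k$-th shift of $p$, hence is nonzero, so some $r_{i}$ begins with $e_{1}\cdots e_{k}$. Since there are finitely many $r_{i}$ and the condition is prefix-closed, one fixed $r_{i_{0}}$ begins with $e_{1}\cdots e_{k}$ for every $k$, forcing $r_{i_{0}}=p$; as $r_{i_{0}}\in[q]$, this gives $p$ tail-equivalent to $q$.

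Part 2 runs parallel to 1(a)--(b), with finite paths ending at the sink $w$ in place of infinite paths, and is in fact cleaner: the module structure follows from the same relation-checking (relation (4) is never invoked at $w$, since $w$ is a sink, and every basis path with regular source has positive length), while for simplicity the maximal-length trick applies directly — applying $\alpha_{i_{0}}^{*}$ for a term $\alpha_{i_{0}}$ of greatest length isolates the trivial path $w$, and $\alpha\cdot w=\alpha$ then recovers every basis element. The main obstacle I anticipate is part 1(b)(i): making the reduction to a single term genuinely terminate even though the $q_{i}$ are tail-equivalent, so that shared tails and offsets could in principle prevent the count from dropping; extracting the longest common prefix before stripping the distinguishing edge is exactly what guarantees a strict decrease.
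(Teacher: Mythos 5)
The paper does not prove this statement: Theorem \ref{Chen} is quoted from Chen \cite{C} as background material, so there is no in-paper argument to compare against. Your reconstruction is correct and follows the standard (essentially Chen's original) route: the universal property applied to the shift/unshift operators on the basis of paths for 1(a) and 2, prefix-stripping to reduce a nonzero element of a submodule to a single basis vector for simplicity, and the nested-prefix pigeonhole applied to $\theta(p)$ for the converse in 1(c). One cosmetic point in 1(b): if the $q_{i}$ do not all share the same source, the ``longest common prefix'' $C$ is undefined; begin instead by applying the vertex idempotent $s(q_{1})$ (equivalently, treat $C$ as empty and strip the first edge of $q_{1}$), which already annihilates at least one term while preserving the $q_{1}$-term, after which your induction proceeds exactly as written.
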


The next result points out which of the above-mentioned simple $L$-modules are
graded-simple modules.

\begin{theorem}
\label{HR}(\cite{HR}, Examples A,B,C; Proposition 3.6) Let $E$ be an arbitrary
graph and $L=L_{\mathbb{K}}(E)$.

(i) The simple left $L$-modules $N_{w}$ and $S_{v\infty}$ are graded-simple modules.

(ii) If $p=ccc\cdot\cdot\cdot$ is an infinite rational path in $E$, then the
corresponding simple left $L$-module $V_{[p]}$ is not graded-simple.

(iii) If $p$ is an irrational infinite path, then the simple module $V_{[p]}$
is a graded-simple $L$-module.

(iv) If a vertex $v$ in $E$ is a line point or a Laurent vertex, then the left
ideal $Lv$ is a graded-simple left-ideal of $L$.
\end{theorem}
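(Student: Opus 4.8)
The plan is to separate the four parts into those where ordinary simplicity is already available and those where it is not. In (i)--(iii) each of the modules $N_w$, $S_{v\infty}$, $V_{[p]}$ is \emph{simple} by Theorem \ref{Chen}, so once such a module is shown to admit a compatible $\mathbb{Z}$-grading, every nonzero graded submodule is in particular a nonzero submodule, hence the whole module, and graded-simplicity follows for free. Thus the work in (i) and (iii) is to \emph{exhibit} a grading, and the work in (ii) is to \emph{obstruct} one. Part (iv) is of a different nature, since for a Laurent vertex $Lv$ is graded-simple while failing to be simple, so it must be handled by a genuinely graded argument.

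For (i) I would grade $N_w$ by path length, setting $(N_w)_n=\Span_{\mathbb{K}}\{p:\,r(p)=w,\ |p|=n\}$, and verify on the generators that $u\in L_0$ preserves degree, $e\in L_1$ raises it by one, and $e^*\in L_{-1}$ lowers it by one; this makes $N_w$ a graded module, and simplicity upgrades it to graded-simple. The module $S_{v\infty}$ is treated identically with its own length grading. For (iii), with $p$ irrational, I would grade $V_{[p]}$ by the \emph{relative lag}: for $q\in[p]$ choose $m,n$ with $q_{m+i}=p_{n+i}$ for all $i\ge0$ and set $d(q)=m-n$, declaring $(V_{[p]})_d=\Span_{\mathbb{K}}\{q:\,d(q)=d\}$. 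The essential point is that $d(q)$ is well defined: two matchings $(m,n)$ and $(m',n')$ would force $p$ to be eventually periodic, hence rational, unless $m-n=m'-n'$, and irrationality rules this out. The generator check is as in (i), so $V_{[p]}$ is graded, and therefore graded-simple.

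For (ii) the same simplicity shows $V_{[p]}$ cannot have a proper nonzero graded submodule, so ``not graded-simple'' must mean ``not gradable,'' and that is what I would prove. Working with the basis vector $c^\infty\in[p]$, suppose $V_{[p]}=\bigoplus_n V_n$ were a compatible grading and write $c^\infty=\sum_j p_{(j)}$ as a finite sum of homogeneous components. A direct computation with the defining action gives $c\cdot c^\infty=c^\infty$, where the closed path $c$ lies in $L_k$ with $k=|c|>0$; comparing components in each $V_m$ yields $p_{(m)}=c\cdot p_{(m-k)}$ for every $m$. Choosing $m$ minimal with $p_{(m)}\neq0$ forces $p_{(m)}=c\cdot p_{(m-k)}=0$, a contradiction, so no compatible grading exists.

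Finally (iv), which I expect to be the main obstacle, I would deduce from the graded corner criterion: for a homogeneous idempotent $e$ in a $\mathbb{Z}$-graded ring, $Le$ is graded-simple exactly when $eLe$ is a graded division ring. Since $v\in L_0$, it remains to compute $vLv=\Span_{\mathbb{K}}\{\alpha\beta^*:\,s(\alpha)=s(\beta)=v,\ r(\alpha)=r(\beta)\}$. The key mechanism is the collapse forced by relation (4) together with the absence of bifurcations and of exits: at each vertex of the line $\mu$ and of the exit-free cycle $c$ there is a single emitted edge $g$, so $gg^*$ equals its source vertex, and telescoping gives $\mu\mu^*=v$ and $cc^*=c^*c=s(c)$. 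When $v$ is a line point this collapses every $\alpha\beta^*$ (necessarily $\alpha=\beta$) to $v$, so $vLv=\mathbb{K}v\cong\mathbb{K}$, a field; when $v$ is a Laurent vertex the cycle contributes the invertible homogeneous element $y=\mu c\mu^*$ of degree $m=|c|$ with $yy^*=y^*y=v$, whence $vLv\cong\mathbb{K}[x^m,x^{-m}]$, a graded field. In either case $vLv$ is a graded division ring, so $Lv$ is graded-simple. The delicate points I would watch are the well-definedness of the lag in (iii), the finiteness of the homogeneous decomposition in (ii), and—above all—carrying out the idempotent collapse in the Laurent case and invoking the graded rather than the classical corner criterion, since there $Lv$ is graded-simple without being simple.
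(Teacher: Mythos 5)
The paper offers no proof of Theorem \ref{HR} to compare against: it is quoted verbatim from \cite{HR} (Examples A, B, C and Proposition 3.6). Measured against those sources, your reconstruction follows essentially the same route and is correct in parts (i)--(iii): the length grading on $N_w$ and $S_{v\infty}$ together with the observation that a graded module that is simple is automatically graded-simple; the tail-lag grading on $V_{[p]}$, where your well-definedness argument is exactly right (two matchings with different lags force eventual periodicity of $p$, contradicting irrationality); and for (ii) the correct reading of ``not graded-simple'' as ``admits no compatible grading,'' proved by writing $c^{\infty}$ as a finite sum of homogeneous components, using $c\cdot c^{\infty}=c^{\infty}$ with $c$ homogeneous of degree $|c|>0$, and deriving a contradiction at the minimal nonzero component --- this is precisely the mechanism of Proposition 3.6 in \cite{HR}.

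The one genuine flaw is in (iv): you invoke the ``graded corner criterion'' as an unconditional equivalence, but the direction you actually use --- $eLe$ a graded division ring implies $Le$ graded-minimal --- is false over arbitrary rings, even trivially graded ones. For instance, in $R=\left(\begin{smallmatrix}\mathbb{Z}&\mathbb{Q}\\ 0&\mathbb{Q}\end{smallmatrix}\right)$ with $e=e_{22}$, the corner $eRe\cong\mathbb{Q}$ is a division ring, yet $Re$ properly contains the nonzero left ideal $\left(\begin{smallmatrix}0&\mathbb{Q}\\ 0&0\end{smallmatrix}\right)$. The converse of Brauer's lemma requires (graded) semiprimeness: if $N\subseteq Le$ is a nonzero graded submodule with $eN=0$, then $x=xe$ for all $x\in N$ gives $N^{2}=0$, which semiprimeness forbids; hence some homogeneous $x\in N$ has $0\neq ex=exe\in eLe$, this element is invertible in the graded division ring $eLe$, and $e\in N$ follows, so $N=Le$. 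Since Leavitt path algebras are semiprime (a standard consequence of the Reduction Theorem, see \cite{AAS}), your argument is rescued by adding this hypothesis and citation --- but as written the criterion is wrong, and the gap is real because this implication is the entire engine of (iv). Your corner computations themselves are sound: $\mu\mu^{*}=v$ and $dd^{*}=s(d)$ for initial segments $d$ of the exit-free cycle $c$ collapse every $\alpha\beta^{*}\in vLv$ to $v$ or to $y^{k}$ with $y=\mu c\mu^{*}$, $y^{*}y=yy^{*}=v$, giving $vLv\cong\mathbb{K}$ in the line-point case and $vLv\cong\mathbb{K}[x^{m},x^{-m}]$, $m=|c|$, in the Laurent case, both graded fields.
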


\section{Grading of Matrices}

\noindent In this section, we recall the grading of the rings\ of matrices of
finite order and also the rings of infinite matrices having at most finitely
many non-zero entries (see \cite{H}, \cite{HR}, \cite{NO}). This will be used
in the main theorem of the next section.

Let $\Gamma$ be an additive abelian group and $A=\oplus_{\lambda\in\Gamma
}A_{\lambda}$ be a $\Gamma$-graded ring. Fix an $n$-tuple $(\delta_{1}%
,\cdot\cdot\cdot,\delta_{n})$, where $\delta_{i}\in\Gamma$. Then the matrix
ring $M_{n}(A)$ can be made a $\Gamma$-graded ring $M_{n}(A)=\oplus
_{\lambda\in\Gamma}M_{n}(A)_{\lambda}(\delta_{1},\cdot\cdot\cdot,\delta_{n})$
by defining, for each $\lambda\in\Gamma$, its $\lambda$-homogeneous component
as the following additive subgroup $M_{n}(A)_{\lambda}(\delta_{1},\cdot
\cdot\cdot,\delta_{n})$ of $M_{n}(A)$ consisting of $n\times n$ matrices

$M_{n}(A)_{\lambda}(\delta_{1},\cdot\cdot\cdot\delta_{n})=$ $\left(
\begin{array}
[c]{cccccc}%
A_{\lambda+\delta_{1}-\delta_{1}} & A_{\lambda+\delta_{2}-\delta_{1}} & \cdot
& \cdot & \cdot & A_{\lambda+\delta_{n}-\delta_{1}}\\
A_{\lambda+\delta_{2}-\delta_{2}} & A_{\lambda+\delta_{2}-\delta2} &  &  &  &
A_{\lambda+\delta_{n}-\delta_{2}}\\
\cdot &  &  &  &  & \cdot\\
\cdot &  &  &  &  & \cdot\\
A_{\lambda+\delta_{1}-\delta_{n}} & A_{\lambda+\delta_{2}-\delta_{n}} &  &  &
& A_{\lambda+\delta_{n}-\delta_{n}}%
\end{array}
\right)  \qquad\qquad\qquad(1)$

This shows that for each homogeneous element $x\in A$, the degree of the
matrix $e_{ij}(x)$ with $x$ in the $ij$-position and with every other entry
$0$ is given by
\[
\deg(e_{ij}(x))=\deg(x)+\delta_{i}-\delta_{j}\text{.\qquad\qquad\qquad(2)}%
\]

\begin{lemma}
\label{RepresentFinite matrix}(Lemma\ I.5.4, \cite{NO}) that $M_{n}%
(A)(\delta_{1},\cdot\cdot\cdot,\delta_{n})$ is graded isomorphic to the ring
$End_{A}(M)$ of a graded free $A$-module with a homogeneous basis
$\{b_{1},\cdot\cdot\cdot,b_{n}\}$ with $\deg(b_{i})=\delta_{i}$, for
$i=1,\cdot\cdot\cdot,n$.
\end{lemma}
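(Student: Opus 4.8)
The plan is to exhibit an explicit graded isomorphism between $M_n(A)(\delta_1,\dots,\delta_n)$ and $\mathrm{End}_A(M)$, where $M$ is the graded free $A$-module on the homogeneous basis $\{b_1,\dots,b_n\}$ with $\deg(b_i)=\delta_i$. First I would fix once and for all the standard (ungraded) isomorphism $\Phi\colon M_n(A)\to\mathrm{End}_A(M)$ given by letting the matrix $(a_{ij})$ act on $M=\bigoplus_{i=1}^n A\,b_i$ by $b_j\longmapsto\sum_{i=1}^n a_{ij}\,b_i$, so that the matrix unit $e_{ij}(x)$ maps $b_j$ to $x\,b_i$ and kills $b_k$ for $k\neq j$. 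That $\Phi$ is a ring isomorphism of $A$-modules is the classical fact and I would simply invoke it; the entire content of the lemma is that $\Phi$ respects the two gradings, so that is where the work lies.

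The key step is to compute the degree of $\Phi(e_{ij}(x))$ in $\mathrm{End}_A(M)$ for a homogeneous $x\in A_\mu$ and compare it with the degree of $e_{ij}(x)$ read off from formula (2). Recall that a homogeneous endomorphism $f$ of degree $\lambda$ is one satisfying $f(M_\gamma)\subseteq M_{\gamma+\lambda}$ for all $\gamma$; for a graded free module this means $\deg f(b_j)-\deg b_j$ is the same value $\lambda$ for every basis vector on which $f$ is nonzero. Now $\Phi(e_{ij}(x))$ sends $b_j$ (of degree $\delta_j$) to $x\,b_i$ (of degree $\mu+\delta_i$) and sends every other $b_k$ to $0$, so it is homogeneous of degree $(\mu+\delta_i)-\delta_j=\deg(x)+\delta_i-\delta_j$. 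This matches (2) exactly, so $\Phi$ carries each homogeneous matrix unit to a homogeneous endomorphism of the same degree.

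To finish I would upgrade this from generators to the whole grading. Every homogeneous element of $M_n(A)_\lambda(\delta_1,\dots,\delta_n)$ is, by the description in (1), a $\K$-linear combination of matrix units $e_{ij}(x)$ with $x$ homogeneous and $\deg(x)+\delta_i-\delta_j=\lambda$; applying $\Phi$ sends each summand to a degree-$\lambda$ endomorphism, hence $\Phi\big(M_n(A)_\lambda(\delta_1,\dots,\delta_n)\big)\subseteq\mathrm{End}_A(M)_\lambda$. For the reverse inclusion I would use that $\Phi$ is surjective and that any homogeneous $f\in\mathrm{End}_A(M)_\lambda$ is determined by its values $f(b_j)=\sum_i a_{ij}b_i$, where the homogeneity constraint $\deg f(b_j)=\delta_j+\lambda$ forces each nonzero $a_{ij}$ to be homogeneous of degree $\lambda+\delta_j-\delta_i$; the corresponding matrix then lies in $M_n(A)_\lambda(\delta_1,\dots,\delta_n)$ and maps to $f$. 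Thus $\Phi$ restricts to a bijection on each homogeneous component and is a graded isomorphism.

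The main obstacle, such as it is, is purely bookkeeping: keeping the index placement in (1) and the source/target convention for $\Phi$ consistent so that the sign of $\delta_i-\delta_j$ comes out right (it is easy to transpose and land on $\delta_j-\delta_i$). Because the lemma is quoted verbatim from \cite{NO}, I would treat the verification as a sanity check that our conventions agree with formula (2) rather than as a novel argument, and simply record that the two gradings coincide on matrix units and therefore everywhere by linearity.
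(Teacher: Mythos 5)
The paper gives no proof of this lemma at all --- it is quoted as Lemma I.5.4 of Nastasescu--van Oystaeyen \cite{NO} --- so your write-up supplies the verification the authors delegate to the literature, and it is the standard one: transport the ungraded matrix-unit isomorphism, check degrees on matrix units against formula (2), then match the homogeneous components in both directions against the display (1). Your degree bookkeeping is correct and consistent with the paper's conventions: $\Phi(e_{ij}(x))$ sends $b_j$ of degree $\delta_j$ to $xb_i$ of degree $\deg(x)+\delta_i$, hence is homogeneous of degree $\deg(x)+\delta_i-\delta_j$, matching (2); and conversely, for $f$ homogeneous of degree $\lambda$ the constraint $\deg f(b_j)=\delta_j+\lambda$ forces $\deg(a_{ij})=\lambda+\delta_j-\delta_i$, which is exactly the $(i,j)$ entry $A_{\lambda+\delta_j-\delta_i}$ appearing in (1). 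One point worth a sentence rather than silence: you implicitly use that $\mathrm{End}_A(M)$ is itself $\Gamma$-graded, i.e.\ that every endomorphism is a finite sum of homogeneous ones; this holds here precisely because $M$ is finitely generated, and should be said.

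There is one genuine slip, though it is conventional rather than conceptual. With your stated setup --- $M$ a \emph{left} $A$-module and endomorphisms written on the left --- the map $\Phi$ is not a ring homomorphism when $A$ is noncommutative: if $f(b_j)=\sum_i a_{ij}b_i$ and $g(b_k)=\sum_j c_{jk}b_j$, then $(f\circ g)(b_k)=\sum_i\bigl(\sum_j c_{jk}a_{ij}\bigr)b_i$, so the matrix of $f\circ g$ has entries $\sum_j c_{jk}a_{ij}$ rather than $\sum_j a_{ij}c_{jk}$, and $\Phi$ reverses multiplication, identifying $\mathrm{End}_A(M)$ with $M_n(A^{\mathrm{op}})$. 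The ``classical fact'' you invoke holds verbatim for free \emph{right} modules, or for left modules with endomorphisms written on the side opposite the scalars; either fix is one line and leaves every degree computation untouched, since the grading constraints on the entries are the same in both conventions. In the paper's actual applications $A$ is $\mathbb{K}$ or $\mathbb{K}[x^{m},x^{-m}]$, both commutative, so nothing downstream is affected --- but the lemma is stated for an arbitrary graded ring $A$, and as written your $\Phi$ fails multiplicativity there.
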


Next, let $A$ be a $\Gamma$-graded ring and let $I$ be an arbitrary infinite
index set. Consider the ring $M_{I}(A)$ consisting of square matrices whose
rows and columns are indexed by $I$ such that at most finitely many entries
are non-zero. Now fix a \textquotedblleft vector" $\ \bar{\delta}=(\cdot
\cdot\cdot,\delta_{i},\cdot\cdot\cdot)_{i\in I}$ where $\delta_{i}\in\Gamma$.
Following the same way used for grading matrices of finite order $M_{I}(A)$
can be made a $\Gamma$-graded ring. As before, for each $(i,j)\in I\times I$
and homogeneous element $x\in A$, $e_{ij}(x)$ denotes the matrix having $x$ at
the $ij$ position and every other entry being $0$. Under the stated grading,
$\deg(e_{ij}(x))=\deg(x)+\delta_{i}-\delta_{j}$. This makes $M_{I}(A)$ a
$\Gamma$-graded ring which we denote by $M_{I}(A)(\bar{\delta})$. Clearly, if
$I$ is finite with $|I|=n$, then the usual graded matrix ring $M_{n}(A)$
coincides (after a suitable permutation) with $M_{n}(A)(\delta_{1},\cdot
\cdot\cdot,\delta_{n})$.

Now, let us translate the above in the context of Leavitt path algebras.
Suppose $E$ is a finite acyclic graph consisting of exactly one sink $v$. Let
$\{p_{i}:1\leq i\leq n\}$ be the set of all paths ending at $v$. Then it was
shown in (Lemma 3.4, \cite{AAS})%
\[
L_{\mathbb{K}}(E)\cong M_{n}(\mathbb{K})\qquad\qquad\qquad(3)
\]
under the map
\[
p_{i}p_{j}^{\ast}\longmapsto e_{ij}\text{ \qquad\qquad\qquad(4)}%
\]

Now taking into account the grading, it was further shown in (Theorem
4.14,\cite{H-1}) that the same map (4) induces a graded isomorphism
\[
L_{\mathbb{K}}(E)\longrightarrow M_{n}(\mathbb{K})(|p_{1}|,\cdot\cdot
\cdot,|p_{n}|)\qquad\qquad\qquad(5).
\]
In the case of a comet graph $E$ (that is, a finite graph $E$ in which every
path eventually ends at a vertex on a cycle $c$ without exits), it was shown
in \cite{AAS-1} that the map%

\[
p_{i}c^{k}p_{j}^{\ast}\longmapsto e_{ij}(x^{k})
\]
induces an isomorphism
\[
L_{\mathbb{K}}(E)\longrightarrow M_{n}(\mathbb{K}[x,x^{-1}])\qquad\qquad
\qquad(6).
\]
Again taking into account the grading, it was shown in (Theorem 4.20,
\cite{H-1}) that the map%

\[
p_{i}c^{k}p_{j}^{\ast}\longmapsto e_{ij}(x^{k|c|})
\]
induces a graded isomorphism%
\[
L_{\mathbb{K}}(E)\longrightarrow M_{n}(\mathbb{K}[x^{|c|},x^{-|c|}%
])(|p_{1}|,\cdot\cdot\cdot\cdot,|p_{n}|)\qquad\qquad\qquad(7)
\]
Later in the paper \cite{AAPS}, the isomorphisms (3) and (6) were extended to
infinite acyclic and infinite comet graphs respectively (see Proposition 3.6,
\cite{AAPS}). The same isomorphisms with the grading adjustments will induce
graded isomorphisms for Leavitt path algebras of such graphs.

We now describe this extension below.

\begin{theorem}
\label{Hazrat}$($Theorems 4.14, 4.20, \cite{H-1}$)$ (i) Suppose $E$ is a
finite acyclic graph containing exactly one sink $w$ (so every path in $E$
eventually ends at $w$). Let $\{p_{1},\cdot\cdot\cdot,p_{n}\}$ be the set of
all paths in $E$ that end at $w$. Then there is a $\mathbb{Z}$-graded
isomorphism $L_{\mathbb{K}}(E)\longrightarrow M_{n}(\mathbb{K})(|p_{1}%
|,\cdot\cdot\cdot,|p_{n}|)$ induced by mapping $p_{i}p_{j}^{\ast}$ to $e_{ij}$.

(ii) Suppose $E$ is a finite graph with exactly one cycle $c$ and $c$ is of
length $m$, is based on a vertex $u$ and has no exits. Suppose further that
every path in $E$ ends at a vertex on $c$. If $p_{1},\cdot\cdot\cdot,p_{n}$ is
a listing of all the paths that end at $u$, but do not go through the entire
cycle $c$, then there is a $\mathbb{Z}$-graded isomorphism
\[
L_{\mathbb{K}}(E)\longrightarrow M_{n}(\mathbb{K}[x^{m},x^{-m}])(|p_{1}%
|,\cdot\cdot\cdot,|p_{n}|)
\]
induced by mapping $p_{i}c^{k}p_{j}^{\ast}$ to $e_{ij}(x^{km})$.
\end{theorem}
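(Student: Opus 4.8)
The plan is to build the graded isomorphism directly via the Universal Property of $L_{\mathbb{K}}(E)$, rather than first producing a non-graded isomorphism and then checking degrees. For part (i), I would fix the listing $\{p_1,\ldots,p_n\}$ of all paths ending at the unique sink $w$ and define candidate generators in the target ring $M_n(\mathbb{K})(|p_1|,\ldots,|p_n|)$: send each vertex $v$ to $\sum e_{ii}$ summed over those $i$ with $s(p_i)=v$, send each edge $e$ to $\sum e_{ij}$ over the pairs $(i,j)$ with $p_i=e\,p_j$, and send $e^{*}$ to the transpose combination $\sum e_{ji}$. The main verification is that these images satisfy the defining relations (1)--(4) of Definition~\ref{LPA}; relations (1)--(3) are bookkeeping with the matrix units $e_{ij}e_{kl}=\delta_{jk}e_{il}$, and relation (4), $v=\sum_{e\in s^{-1}(v)}ee^{*}$, follows because every path ending at $w$ and starting at a regular $v$ extends uniquely backward through exactly one edge out of $v$, so the idempotents $e_{ii}$ for $s(p_i)=v$ partition correctly. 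By the Universal Property this yields an algebra homomorphism $\varphi$, and the same count shows $\varphi(p_ip_j^{*})=e_{ij}$, which forces surjectivity (the $e_{ij}$ generate) and, by a dimension or basis argument on the spanning set $\{pq^{*}\}$, injectivity.

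The genuinely graded content is the degree computation. Using formula (2) from the previous section, $\deg(e_{ij})=\delta_i-\delta_j$ with $\delta_i=|p_i|$, I would check that $\deg(p_ip_j^{*})=|p_i|-|p_j|$ in the $\mathbb{Z}$-grading of $L_{\mathbb{K}}(E)$ matches $\deg(e_{ij})$ in $M_n(\mathbb{K})(|p_1|,\ldots,|p_n|)$ exactly. Since homogeneous elements map to homogeneous elements of the same degree, $\varphi$ is graded, and an isomorphism of graded algebras that is degree-preserving is automatically a graded isomorphism.

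For part (ii), the argument runs parallel but the coefficient ring becomes $\mathbb{K}[x^{m},x^{-m}]$, which records the winding number around the exit-free cycle $c$ of length $m$. Here I would use the listing $p_1,\ldots,p_n$ of paths ending at the base vertex $u$ of $c$ without traversing all of $c$, and send $p_ic^{k}p_j^{*}\mapsto e_{ij}(x^{km})$. The key structural fact is that, because $c$ has no exits and every path ends on $c$, each element $pq^{*}$ can be written uniquely in the normal form $p_i c^{k}p_j^{*}$ for some $i,j$ and $k\in\mathbb{Z}$, which gives both a $\mathbb{K}$-basis of $L_{\mathbb{K}}(E)$ and the matching $\mathbb{K}$-basis $\{e_{ij}(x^{km})\}$ of the target; this normal form is where I expect the main obstacle, since it requires the no-exit and ``all paths end on $c$'' hypotheses to rule out any other reduced words. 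The relations (1)--(4) are then checked as before, with relation (4) at $u$ encoding the single returning edge of $c$ that produces the factor $x^{m}$. Finally the degree check uses $\deg(x^{km})=km$ and $\deg(p_ic^{k}p_j^{*})=|p_i|+km-|p_j|$, which by formula (2) equals $\deg(e_{ij}(x^{km}))=km+\delta_i-\delta_j$ with $\delta_i=|p_i|$, confirming that $\varphi$ is a graded isomorphism.
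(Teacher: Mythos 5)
Your proposal is correct in outline, but note that the paper itself does not prove this theorem: it is quoted from Hazrat (Theorems 4.14 and 4.20 of \cite{H-1}), and the route recalled in Section 3 of the paper is the reverse of yours. There one starts from the known \emph{ungraded} isomorphisms $L_{\mathbb{K}}(E)\cong M_{n}(\mathbb{K})$ and $L_{\mathbb{K}}(E)\cong M_{n}(\mathbb{K}[x,x^{-1}])$ of Abrams--Aranda Pino--Siles Molina (the maps (3)--(6)), and then observes via formula (2) that the very same maps $p_{i}p_{j}^{\ast}\mapsto e_{ij}$ and $p_{i}c^{k}p_{j}^{\ast}\mapsto e_{ij}(x^{km})$ become degree-preserving once the target carries the shifted grading $(|p_{1}|,\dots,|p_{n}|)$. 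You instead build the homomorphism from scratch via the Universal Property, which is self-contained and makes the graded content transparent from the start: your relation checks in (i) are right (for relation (4) the key point, which you state, is that a regular $v$ cannot be the sink $w$, so every $p_{i}$ with $s(p_{i})=v$ begins with a unique edge of $s^{-1}(v)$), and $\varphi(p_{i}p_{j}^{\ast})=e_{ij}$ holds because the only path from $w$ to $w$ in an acyclic graph is trivial. Your degree computations $\deg(p_{i}p_{j}^{\ast})=|p_{i}|-|p_{j}|$ and $\deg(p_{i}c^{k}p_{j}^{\ast})=|p_{i}|+km-|p_{j}|$ match $\deg(e_{ij}(x^{km}))=km+\delta_{i}-\delta_{j}$ exactly, so gradedness is immediate.

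Two steps need more care than you give them. First, in (i) your dimension argument requires that the specific set $\{p_{i}p_{j}^{\ast}\}$ spans $L_{\mathbb{K}}(E)$, not merely the generic spanning set $\{pq^{\ast}\}$: one must iterate relation (4) at non-sink range vertices, writing $pq^{\ast}=\sum_{e\in s^{-1}(r(p))}(pe)(qe)^{\ast}$, and use finiteness and acyclicity to see that this expansion terminates at $w$ (this is the content of Lemma 3.4 of \cite{AAS}, which the paper invokes). Second, in (ii) you assert that the normal form $p_{i}c^{k}p_{j}^{\ast}$ is \emph{unique}, i.e.\ a $\mathbb{K}$-basis of $L_{\mathbb{K}}(E)$, before the isomorphism is in hand; as stated this is circular, since linear independence of these monomials is usually deduced \emph{from} the isomorphism. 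The standard repair preserves your architecture: prove only the \emph{spanning} statement in the source (extend $p$ and $q$ along the exit-free cycle to $u$ using $v=ee^{\ast}$ at cycle vertices, then strip off full traversals of $c$), observe that the images $e_{ij}(x^{km})$ are manifestly linearly independent in $M_{n}(\mathbb{K}[x^{m},x^{-m}])$, and conclude injectivity of $\varphi$ -- whence uniqueness of the normal form a posteriori. With those adjustments your argument is complete and is a legitimate alternative to citing \cite{H-1}.
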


Theorem \ref{Hazrat} was extended to the case when $E$ is a row-finite graph
in \cite{HR}. We list it as a theorem below.

\begin{theorem}
\label{Generalized Hazrat} $($Theorem 6.7, \cite{HR}$)$ Let $E$ be a
row-finite graph which is downward directed and $E^{0},\emptyset$ are the only
hereditary saturated subsets of $E^{0}$.

\begin{enumerate}
\item Suppose $E$ contains a line point $v$ with $T(v)=p$. Let $\{p_{i}%
:i\in\Lambda\}$ be the set of all paths $p_{i}$ in $E$ which meets $T(v)$ for
the first time at $r(p_{i})$. Let $\bar{p}$ be the ``vector" $\bar{p}%
=(\cdot\cdot\cdot,|p_{i}|,\cdot\cdot\cdot)_{i\in\Lambda}$. Then the graded
isomorphism in Theorem \ref{Hazrat}(i) extends to a $\mathbb{Z}$-graded
isomorphism $L_{K}(K)\cong_{gr}M_{\Lambda}(K)(\bar{p})$.

\item Suppose $E$ contains a cycle $c$ with no exits, based at a vertex $u$
and has length $m$. Let $\{q_{j}:j\in\Upsilon\}$ be the set of all paths that
end at $u$, but do not through the entire cycle $c$. Let $\bar{q}$ be the
\textquotedblleft vector" $\bar{q}=(\cdot\cdot\cdot,|q_{j}|,\cdot\cdot
\cdot)_{j\in\Upsilon}$. Then the graded isomorphism in Theorem \ref{Hazrat}%
(ii) extends to a $\mathbb{Z}$-graded isomorphism $L_{\mathbb{K}}(E)\cong%
_{gr}M_{\Upsilon}(\mathbb{K}[x^{m},x^{-m}])(\bar{q})$.
\end{enumerate}
\end{theorem}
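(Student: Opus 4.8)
The plan is to bootstrap the finite-graph isomorphisms of Theorem \ref{Hazrat} to the row-finite setting by realizing both sides as $\mathbb{Z}$-graded direct limits. On the matrix side I would use that a finitely-supported matrix ring is the directed union of its finite blocks: $M_{\Lambda}(\mathbb{K})(\bar p)=\varinjlim_{F} M_{F}(\mathbb{K})(\bar p|_{F})$ over the finite $F\subseteq\Lambda$, each block $M_{F}(\mathbb{K})(\bar p|_{F})$ sitting inside as the (non-unital) matrices supported on $F\times F$ with the induced grading; likewise $M_{\Upsilon}(\mathbb{K}[x^{m},x^{-m}])(\bar q)=\varinjlim_{F}M_{F}(\mathbb{K}[x^{m},x^{-m}])(\bar q|_{F})$. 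The goal is to match these against a compatible exhaustion of $L_{\mathbb{K}}(E)$.

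First I would pin down the graphical structure forced by the hypotheses. Using Lemma \ref{SaturatedClosure} I would filter $E^{0}=\bigcup_{n}X_{n}$ starting from $X_{0}=T(v)$ (in case (1)) or from the vertices of $c$ (in case (2)), and assign each vertex its least level. A descent argument then shows that in case (1) $E$ has no cycle: every cycle vertex lies outside $X_{0}=T(v)$, so it has level $\geq 1$ and all its children in a strictly lower level, forcing the levels to decrease strictly around the closed path, which is impossible. Hence in case (1) $E$ is acyclic and, by downward-directedness, every path eventually meets the line $T(v)$; in case (2) the same argument shows $c$ is the unique cycle, has no exits, and every path eventually reaches $c$.

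Next I would build an increasing family of finite subgraphs $E_{1}\subseteq E_{2}\subseteq\cdots$ with $\bigcup_{n}E_{n}=E$, arranged so that in case (1) each $E_{n}$ is a finite acyclic graph with a single sink and in case (2) each $E_{n}$ is a finite graph whose only cycle is $c$, $c$ has no exits, and every path of $E_{n}$ ends on $c$; the path data of $E_{n}$ then corresponds to a finite subset $\Lambda_{n}\subseteq\Lambda$ (resp. $\Upsilon_{n}\subseteq\Upsilon$). Each inclusion $E_{n}\subseteq E_{n+1}$ induces, via the Universal Property of $L_{\mathbb{K}}(E)$, a graded monomorphism $L_{\mathbb{K}}(E_{n})\hookrightarrow L_{\mathbb{K}}(E_{n+1})$ --- one only has to observe that the Cuntz--Krieger relation imposed at a vertex that is a sink of $E_{n}$ but becomes regular in $E_{n+1}$ is automatically satisfied in the larger algebra --- and $L_{\mathbb{K}}(E)=\varinjlim_{n}L_{\mathbb{K}}(E_{n})$ as graded algebras. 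Theorem \ref{Hazrat} then supplies, for each $n$, a graded isomorphism $L_{\mathbb{K}}(E_{n})\cong_{gr}M_{\Lambda_{n}}(\mathbb{K})(\bar p|_{\Lambda_{n}})$ (resp. $\cong_{gr}M_{\Upsilon_{n}}(\mathbb{K}[x^{m},x^{-m}])(\bar q|_{\Upsilon_{n}})$), induced by $p_{i}p_{j}^{\ast}\mapsto e_{ij}$ (resp. $q_{i}c^{k}q_{j}^{\ast}\mapsto e_{ij}(x^{km})$).

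The heart of the argument is to verify that these finite isomorphisms are compatible with the two directed systems, i.e. that each connecting monomorphism becomes, under Theorem \ref{Hazrat}, the block inclusion of the corresponding graded matrix rings --- including the correct degree bookkeeping, since extending a path $p_{i}$ by the edges needed to reach the deeper sink shifts its length and hence its degree. Once this commutativity is checked, passing to the colimit yields $L_{\mathbb{K}}(E)\cong_{gr}M_{\Lambda}(\mathbb{K})(\bar p)$ and, respectively, $L_{\mathbb{K}}(E)\cong_{gr}M_{\Upsilon}(\mathbb{K}[x^{m},x^{-m}])(\bar q)$. Faithfulness of the limiting map can also be seen directly: it is graded, so its kernel is a graded ideal; since every vertex maps to a nonzero idempotent and the only hereditary saturated subsets of $E^{0}$ are $\emptyset$ and $E^{0}$, the correspondence between graded ideals and hereditary saturated subsets forces the kernel to be $0$. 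I expect the main obstacle to be exactly this compatibility step: organizing the exhaustion so that every $E_{n}$ has the precise shape demanded by Theorem \ref{Hazrat} while the connecting maps realize the standard matrix block embeddings, and --- most delicately in case (2) --- tracking the single no-exit cycle $c$ uniformly across the system so that it is consistently sent to the degree-$m$ element corresponding to $x^{m}$.
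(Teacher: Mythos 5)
The paper itself gives no proof of this theorem --- it is quoted verbatim from Theorem 6.7 of \cite{HR}, with only the surrounding remark that the (non-graded) colimit isomorphisms of Proposition 3.6 of \cite{AAPS} ``with the grading adjustments'' extend the finite-graph maps. Your direct-limit strategy is exactly that standard route, so in outline you are on the right track. However, your write-up defers precisely the steps that carry the content, and one of them is not mere bookkeeping. First, the exhaustion: for the connecting maps $L_{\mathbb{K}}(E_{n})\to L_{\mathbb{K}}(E_{n+1})$ to be well defined, what is needed is that the subgraphs be \emph{complete}, i.e.\ every non-sink vertex of $E_{n}$ must emit in $E_{n}$ \emph{all} of the edges it emits in $E$ (otherwise the relation $v=\sum_{e\in s_{E_n}^{-1}(v)}ee^{*}$ imposed in $L_{\mathbb{K}}(E_{n})$ fails in the larger algebra); your remark about the CK relation at a sink of $E_{n}$ that becomes regular in $E_{n+1}$ has the direction inverted --- that case is the harmless one, since no relation is imposed at sinks. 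That a complete finite exhaustion of the required shape (single sink, resp.\ comet onto $c$) exists is true but must be built: use the level function from Lemma \ref{SaturatedClosure} with $X_{0}=T(v)$ (resp.\ $X_{0}=c^{0}$), note that a vertex of level $k\geq1$ has all children of level $\leq k-1$, so the set of level-$\geq1$ vertices reachable from a finite set is finite, and cap the construction with a line vertex $u_{N}$ declared a sink (resp.\ with the cycle $c$).

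The genuine gap is your case (1) degree bookkeeping, which you flag but then assert resolves in favor of the stated vector $\bar{p}=(|p_{i}|)$. It does not. Write $T(v)=u_{1}\to u_{2}\to\cdots$ and let $p_{i}$ first meet the line at $u_{a_{i}}$. At stage $n$ the matrix units are indexed by paths into the sink $u_{N_{n}}$, namely $p_{i}\gamma_{i}$ with $\gamma_{i}$ the line segment of length $N_{n}-a_{i}$, so the block grading vector is $\bigl(|p_{i}|+N_{n}-a_{i}\bigr)_{i}$; the corner embeddings shift all diagonal degrees by the common constant $N_{n+1}-N_{n}$, so the colimit is $M_{\Lambda}(\mathbb{K})(\bar{\delta})$ with $\delta_{i}=|p_{i}|-a_{i}$ up to a common translation --- \emph{not} $\delta_{i}=|p_{i}|$. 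These genuinely differ: for the two-vertex line $u_{1}\to u_{2}$ the first-meeting paths are the two vertices, both of length $0$, and $M_{2}(\mathbb{K})(0,0)$ (trivially graded) is not graded-isomorphic to $L_{\mathbb{K}}(E)\cong_{gr}M_{2}(\mathbb{K})(0,1)$, whose degree-$0$ component is $\mathbb{K}\times\mathbb{K}$, while the corrected vector $(|p_{i}|-a_{i})=(0,-1)$ gives $M_{2}(\mathbb{K})(0,-1)\cong_{gr}M_{2}(\mathbb{K})(0,1)$ as it should. So carrying out your colimit honestly proves the theorem with the position-corrected shift vector (the statement as transcribed in this paper suppresses the correction), and the ``tension'' you identified is a real discrepancy to be resolved, not a commutativity check to be waved through. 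Case (2) is clean precisely because all the $q_{j}$ end at the fixed base $u$, so $\deg(q_{i}c^{k}q_{j}^{*})=km+|q_{i}|-|q_{j}|$ matches $\deg e_{ij}(x^{km})$ with $\delta_{j}=|q_{j}|$ directly, and your argument there goes through once the exhaustion and completeness points above are supplied.
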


In the proof of the main theorems in the next two sections, we will be using
the following result from \cite{HR}

\begin{theorem}
\label{graded socle} (\cite{HR}, Theorem 2.10) Let $E$ be an arbitrary graph,
$\mathbb{K}$ be any field and $L=L_{\mathbb{K}}(E)$. Then the two-sided ideal
generated by the set consisting of all the line points and all the vertices on
cycles without exits in $E$ is the graded socle $\Soc_{gr}(L)$ given by
\[
Soc_{gr}(L)\cong_{gr}(\oplus_{i\in X}M_{\Lambda_{i}}(\mathbb{K})(\bar{\delta
}_{i}))\oplus(\oplus_{j\in Y}M_{\Upsilon_{j}}(\mathbb{K}[x^{m_{j}},x^{-m_{j}%
}])(\bar{\sigma}_{j}))
\]
with appropriate matrix gradings.
\end{theorem}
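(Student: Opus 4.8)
The plan is to identify $\Soc_{gr}(L)$ with the graded two-sided ideal $I$ generated by the line points and the vertices lying on cycles without exits, and then to decompose $I$ by means of Theorem \ref{Generalized Hazrat}. Write $\pl$ for the set of line points of $E$ and $\pc$ for the set of vertices lying on a cycle without exits, and let $I$ be the two-sided ideal these generate. Recall that $\Soc_{gr}(L)$ is by definition the sum of all minimal graded left ideals of $L$; since $L$ is graded semiprime, this sum is in fact a graded two-sided ideal, so it suffices to compare the two ideals.

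First I would establish the inclusion $I\subseteq\Soc_{gr}(L)$. If $v\in\pl$ then $Lv$ is a graded-simple, hence minimal, graded left ideal by Theorem \ref{HR}(iv); and if $v\in\pc$, then $v$ is a Laurent vertex with trivial tail, so Theorem \ref{HR}(iv) again shows $Lv$ is minimal graded. Thus every generator of $I$ lies in $\Soc_{gr}(L)$, and because $\Soc_{gr}(L)$ is a two-sided ideal it contains the whole of $I$.

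The substantive direction is $\Soc_{gr}(L)\subseteq I$, i.e. that every minimal graded left ideal lies in $I$. Here I would first reduce to vertices: a minimal graded left ideal $M$ is generated by a homogeneous idempotent (graded semiprimeness forces $M^{2}\neq0$), and using the local units of $L$, which are finite sums of vertices, one locates a vertex $v$ with $M\cong_{gr}Lv$ and $Lv$ itself minimal graded. It then remains to show that if $Lv$ is minimal graded, then $v$ is a line point or a Laurent vertex. I would argue contrapositively: if $T(v)$ contains a bifurcation vertex or a cycle with an exit, I would exhibit inside $Lv$ a nonzero proper graded left ideal, built from the two distinct edges at the bifurcation or from the exit off the cycle, contradicting minimality. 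This forces $T(v)$ to be a bifurcation-free path that is either acyclic (so $v\in\pl$) or terminates in an exit-free cycle (so $v$ is a Laurent vertex). Since the ideal generated by a Laurent vertex coincides, via saturation, with the ideal generated by the vertices of its terminal cycle, we get $M\subseteq I$ in every case.

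The main obstacle is precisely this last dichotomy, which requires controlling the corner $vLv$ as a graded ring: one checks that $vLv$ is a graded division ring exactly when $v$ is a line point (where $vLv\cong\mathbb{K}$) or a Laurent/no-exit-cycle vertex (where $vLv\cong\mathbb{K}[x^{m},x^{-m}]$ graded as a graded field), while a bifurcation or a cycle-exit produces nontrivial graded idempotents in $vLv$ and hence a proper decomposition of $Lv$. Finally, having identified $\Soc_{gr}(L)=I$, I would obtain the displayed decomposition by partitioning the generating vertices into their hereditary-saturated components: the line points, grouped by tail-equivalence of their trees, yield the summands $M_{\Lambda_{i}}(\mathbb{K})(\bar{\delta}_{i})$, while each exit-free cycle of length $m_{j}$ yields a summand $M_{\Upsilon_{j}}(\mathbb{K}[x^{m_{j}},x^{-m_{j}}])(\bar{\sigma}_{j})$, the matrix descriptions and their gradings coming from the component-wise application of Theorem \ref{Generalized Hazrat}(1) and (2), and the directness of the sum from the mutual disjointness of distinct components.
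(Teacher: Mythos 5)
First, a caveat: the paper does not actually prove this theorem --- it imports it verbatim from \cite{HR} (Theorem 2.10), so your attempt can only be measured against the argument there. Measured that way, your outline reconstructs essentially the standard proof: the easy inclusion via Theorem \ref{HR}(iv) (noting, correctly, that a vertex on a no-exit cycle is a Laurent vertex with trivial tail), and the hard inclusion via a Brauer-type corner analysis, $Lv$ graded-minimal if and only if $vLv$ is a graded division ring, with $vLv\cong\mathbb{K}$ for a line point, $vLv\cong\mathbb{K}[x^{m},x^{-m}]$ for a Laurent vertex over a cycle of length $m$, and with a bifurcation (or an exit from a cycle) in $T(v)$ producing orthogonal homogeneous idempotents such as $pee^{\ast}p^{\ast}$ and $pff^{\ast}p^{\ast}$ that split $Lv$. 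This is the route taken in \cite{HR} and in the earlier ungraded socle literature, and your observation that a Laurent vertex lies in the ideal generated by its terminal cycle via saturation is exactly the needed gluing step.

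Two steps are too quick as written. First, the reduction of an abstract minimal graded left ideal $M$ to one of the form $Lv$: choosing a vertex $u$ from a local unit with $Mu\neq 0$ gives a graded-simple left ideal $Mu\subseteq Lu$, but $Lu$ itself need not be graded-minimal (for instance $u$ could be a vertex added by saturation, which is neither a line point nor Laurent yet lies in the socle), so ``one locates a vertex $v$ with $M\cong_{gr}Lv$ and $Lv$ minimal'' is not a consequence of local units alone. The correct tool is the (graded) reduction lemma: writing $M=Lx$ with $x$ homogeneous, there are paths $\alpha,\beta$ and $k\in\mathbb{K}\setminus\{0\}$ with $\alpha^{\ast}x\beta=kw$ for a vertex $w$; then $M\cong_{gr}Lx\beta=Lw$ (up to shift) and $Lw$ is graded-minimal, so the corner dichotomy applies to $w$. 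Second, Theorem \ref{Generalized Hazrat} cannot be invoked ``component-wise'' as stated: its hypotheses require $E$ to be row-finite and downward directed with $\emptyset,E^{0}$ the only hereditary saturated subsets, and its conclusion computes the whole unital-or-locally-unital algebra $L_{\mathbb{K}}(E)$, whereas a socle component is a non-unital graded ideal inside the Leavitt path algebra of an arbitrary graph. To finish you must either first identify each component ideal $I(\overline{H}_{i})$ with the Leavitt path algebra of an auxiliary (hedgehog) graph --- which does satisfy those hypotheses --- or, as in \cite{HR}, exhibit the matrix units directly: the elements $pq^{\ast}$, where $p,q$ meet the tail of the line point for the first time at a common vertex, span the acyclic components and give $M_{\Lambda_{i}}(\mathbb{K})(\bar{\delta}_{i})$ with degrees the path lengths, while the elements $pc_{j}^{k}q^{\ast}$ for a no-exit cycle $c_{j}$ of length $m_{j}$ give $M_{\Upsilon_{j}}(\mathbb{K}[x^{m_{j}},x^{-m_{j}}])(\bar{\sigma}_{j})$. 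With those two repairs your sketch is correct and coincides in substance with the cited proof.
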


\section{Leavitt Path Algebras with exactly one isomorphism class of
graded-simple modules}

\noindent Let $E$ be an arbitrary graph and let $L:=L_{\mathbb{K}}(E)$ be the
Leavitt path algebra of the graph $E$ over a field $\mathbb{K}$. In this
section, we obtain a complete characterization of those Leavitt path algebras
$L$ over which any two graded-simple modules are graded-isomorphic. In this
case, the graph $E$ is shown to be row-finite, downward directed and $E^{0}$
is the hereditary saturated closure of a line point or of a vertex on a cycle
without exits.

Note that graded-simple left modules over $L=L_{\mathbb{K}}(E)$ always exist.
Indeed, if $u\in E^{0}$, use Zorn's Lemma to find a maximal graded left
$L$-submodule $N$ of $Lu$. Then $M=N\oplus(\oplus_{v\in E^{0},v\neq u}Lv)$ is
a maximal graded left ideal of $L$ and so $L/M$ is a graded-simple left $L$-module.

We are now ready to consider the graded version of Naimark's problem for the
$\mathbb{Z}$-graded Leavitt path algebra $L_{\mathbb{K}}(E)$.

\begin{theorem}
\label{Graphical conditions}\textit{Let }$E$\textit{\ be an arbitrary graph
and let }$L:=L_{\mathbb{K}}(E)$\textit{. If any two graded-simple left }%
$L$\textit{-modules are graded isomorphic, then the graph }$E$\textit{\ is
row-finite, downward directed and }$E^{0},\emptyset$\textit{\ are the only
hereditary saturated subsets of }$E^{0}$\textit{. }
\end{theorem}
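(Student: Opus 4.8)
The plan is to prove the contrapositive in three independent pieces, showing that if $E$ fails to be row-finite, or fails to be downward directed, or has a nontrivial hereditary saturated subset, then one can exhibit two graded-simple left $L$-modules that are not graded-isomorphic. The unifying strategy is to use the module constructions recorded in Theorem \ref{Chen} and Theorem \ref{HR} to manufacture concrete graded-simple modules whose non-isomorphism can be detected graphically, typically via the action of the vertex idempotents (which are homogeneous of degree $0$) on a homogeneous generator.

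\textbf{Row-finiteness.} Suppose some vertex $w$ is an infinite emitter. First I would produce a graded-simple module supported (in the sense that some vertex acts nontrivially) near $w$; a convenient choice is a module of the form $N_{v}$ or $S_{v\infty}$ from Theorem \ref{HR}(i), or $Lv$ for a line point $v$ if one exists, which is graded-simple by Theorem \ref{HR}(iv). The key point is that the defining relation (4) of Definition \ref{LPA} is only imposed at regular vertices, so an infinite emitter $w$ behaves fundamentally differently from a regular vertex: $w$ is not the sum $\sum_{e\in s^{-1}(w)}ee^{*}$. I would exploit this by comparing a graded-simple module on which $w$ acts as the identity on a homogeneous vector against one coming from elsewhere in the graph, and show no graded isomorphism can match up the homogeneous components consistently. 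The cleanest route is probably to show directly that the presence of an infinite emitter forces $L$ to have a graded-simple module not graded-isomorphic to a chosen reference module, obstructing the single-isomorphism-class hypothesis.

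\textbf{Hereditary saturated subsets and downward directedness.} If $H$ is a nonempty proper hereditary saturated subset of $E^{0}$, then the graph restricts to two genuinely different regions, and I would build two graded-simple modules, one supported on vertices in $H$ and one on vertices whose support meets $E^{0}\setminus H$; since a graded isomorphism commutes with the degree-$0$ action of the vertex idempotents, and $H$ being hereditary saturated means the ideal $I(H)$ is a graded ideal, a module annihilated by $I(H)$ cannot be graded-isomorphic to one on which some vertex of $H$ acts nontrivially. For downward directedness, if $E$ is not downward directed there exist vertices $u_{1},u_{2}$ with no common descendant, i.e. $T(u_{1})\cap T(u_{2})=\emptyset$; I would then find graded-simple modules $M_{1},M_{2}$ whose ``supports'' lie in $T(u_{1})$ and $T(u_{2})$ respectively (using sinks, cycles without exits, or infinite paths in each tree as furnished by Theorems \ref{Chen} and \ref{HR}), and the disjointness of the trees prevents any $L$-module isomorphism, graded or not.

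The main obstacle I anticipate is the downward-directed case, because one must guarantee that \emph{each} of the two separated regions $T(u_1)$ and $T(u_2)$ actually supports some graded-simple module — this is not automatic, since a tree could fail to contain a sink, a line point, or a cycle without exits, and could instead only carry rational infinite paths whose associated $V_{[p]}$ are \emph{not} graded-simple by Theorem \ref{HR}(ii). Handling this likely requires a case analysis on the structure of each subtree, possibly passing to a quotient $L/I(H)$ or a corner to localize attention to the relevant region, and invoking the existence-of-graded-simple-modules argument (Zorn's Lemma applied to a maximal graded left submodule of some $Lu$) already given in the text to cover trees with no convenient distinguished vertex. Once each region is shown to carry a graded-simple module and the degree-$0$ vertex action is used to separate them, the three pieces together yield the contrapositive.
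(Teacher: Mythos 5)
There is a genuine gap, concentrated in your row-finiteness piece, and it is compounded by your framing of the three pieces as independent. For the infinite emitter $w$ you never actually produce two non-graded-isomorphic graded-simple modules: exploiting the fact that relation (4) of Definition \ref{LPA} is not imposed at $w$ is not a module-theoretic invariant, and your ``cleanest route is probably to show directly\dots'' merely restates the goal. The paper's mechanism is quite different and genuinely needs the other conclusions first: having shown (via the annihilator argument, see below) that $E^{0}$ and $\emptyset$ are the only hereditary saturated subsets, one gets that $E^{0}$ is the hereditary saturated closure of $T(v)$ for every vertex $v$, and since an infinite emitter is not regular, Remark \ref{vertex in sat closure} forces $w\in T(v)$, i.e.\ $v\geq w$ for \emph{every} $v$. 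Applying this to the ranges of the infinitely many edges emitted by $w$ yields infinitely many distinct closed paths based at $w$, from which one assembles two infinite \emph{irrational} paths $p=c_{1}c_{3}c_{5}\cdots$ and $q=c_{2}c_{4}c_{6}\cdots$ that are not tail-equivalent; Theorem \ref{Chen}(c) together with Theorem \ref{HR} (graded-simplicity of $V_{[p]}$ for irrational $p$) then gives two non-graded-isomorphic graded-simple modules. Your proposal contains no analogue of this construction, and because it depends on the hereditary-saturated conclusion, the cases cannot be handled independently as you claim --- though this is repairable by ordering them (first dispose of a nontrivial hereditary saturated subset, then treat the infinite emitter under the assumption that none exists).

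Your second acknowledged obstacle --- guaranteeing that each of $T(u_{1})$, $T(u_{2})$ supports a graded-simple module and that disjoint ``supports'' obstruct isomorphism --- is real as you state it, and the case analysis you sketch (sinks, cycles, irrational paths per tree) would be painful, but it is also unnecessary. The paper avoids all module constructions here: since all graded-simple modules are graded-isomorphic they share a single annihilator, and since $J_{gr}(L)=0$ (Proposition 1.1.33, \cite{H}; Lemma 1.7.4, \cite{NO}) that annihilator is $0$; any nonzero proper graded ideal $I$ would sit inside the (then nonzero) annihilator of a graded-simple module over $L/I$, so $L$ is a graded-simple ring, hence graded-prime, and downward directedness is then a citation (Theorem 2.4, \cite{ABR}), with the triviality of hereditary saturated subsets immediate from graded-simplicity. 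Alternatively, downward directedness follows for free from your hereditary-saturated piece by pure graph theory: by Lemma \ref{SaturatedClosure}, every vertex of $\overline{T(u_{1})}=E^{0}$ has a descendant in $T(u_{1})$, so $u_{2}\geq w$ for some $w\in T(u_{1})$. Your hereditary-saturated piece itself is essentially sound (a graded-simple module over $L/I(H)$ is annihilated by $I(H)$, while the Zorn-type quotient built from $u\in H$ satisfies $u\cdot\bar{u}=\bar{u}\neq 0$, so the annihilators differ), and it is the contrapositive shadow of the paper's annihilator argument; but as submitted, the proposal does not prove row-finiteness and leaves downward directedness to an unfinished case analysis.
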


\textbf{Proof}: Since all the graded-simple left $L$-modules are graded
isomorphic, there can be only one annihilating ideal of the graded-simple left
modules in $L$. Now the graded Jacobson radical $J_{gr}(L)$ is the
intersection of all the annihilators of the graded-simple left $L$-modules
(Lemma 1.7.4 \cite{NO}). Also, by Proposition 1.1.33, \cite{H}, the graded
Jacobson radical $J_{gr}(L)=\{0\}$. So $\{0\}$ is the only annihilator of
graded-simple left $L$-modules. We claim that $L$ is a graded-simple ring,
that is, $L$ has no non-zero proper graded ideals. Indeed, if $I$ is a
non-zero proper graded ideal of $L$, then a graded-simple left module\ over
the Leavitt path algebra $L/I$ will also be a graded-simple left $L$-module
whose annihilating ideal $Q$ contains $I$ and hence is non-zero. This
contradiction shows that $L$ is a graded-simple ring. In particular, $L$ is a
graded-prime ring and so the graph $E$ is downward directed (Theorem 2.4,
\cite{ABR}). By graded-simplicity of $L$, $E^{0}$ and $\emptyset$ are the only
hereditary saturated subsets of $E^{0}$.

We claim that $E$ is row-finite. Suppose, by way of contradiction, $E$
contains an infinite emitter $w$. First note $v\geq w$ for every vertex $v\in
E^{0}$. To see this, observe that $E^{0}$ is the hereditary saturated closure
of $T(v)$ and since $w$ an infinite emitter, Remark
\ref{vertex in sat closure} implies that $w\in T(v)$ and hence $v\geq w$.
Consequently, for each of the infinitely many edges $e_{i}$ with $s(e_{i})=w$,
we have $r(e_{i})\geq w$ and this will give rise to infinitely many distinct
closed paths $c_{i}$ based at $w$. Let $c_{1},c_{2},\cdot\cdot\cdot
,c_{n},\cdot\cdot\cdot$ be the listing of a countable set of distinct closed
paths based at $w$ corresponding to a countable subset of edges $e_{i}$ with
$s(e_{i})=w$ We then obtain two infinite irrational paths $p=c_{1}c_{3}%
c_{5}\cdot\cdot\cdot$ and $q=c_{2}c_{4}c_{6}\cdot\cdot\cdot$ which are not
tail-equivalent. Then, by Theorem \ref{Chen}(c) and Theorem \ref{HR}(ii), we
obtain two non-isomorphic (hence non-graded-isomorphic) graded-simple left
$L$-modules. This contradiction shows that the graph $E$ must be row finite.

The next Lemma from \cite{RT} is useful in proving Theorem
\ref{Only one graded-simple} and also in the next section.

\begin{lemma}
\label{cofinal} Suppose the graph $E$ contains no sinks. Suppose $q=e_{1}%
e_{2}e_{3}....$ is an infinite \ path in $E$ with $s(e_{i})=v_{i}$ for all
$i\geq1$such that no edge on $q$ and no vertex on $q$ is part of a closed
path, and $q$ contains infinitely many bifurcating vertices. Then given any
vertex $v\in E^{0}$, there is a finite path $\gamma=f_{1}\cdot\cdot\cdot
f_{n}$ for some $n\geq1$ \ such that $v=s(f_{1})$ and the last edge $f_{n}\neq
e_{i}$ for all $i\geq1$ and $f_{n}$ is not part of a closed path.
\end{lemma}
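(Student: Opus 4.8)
The plan is to build $\gamma$ in three stages: travel from $v$ onto the path $q$, ride along $q$ to one of its bifurcation vertices, and then step off $q$ along the second edge available at that bifurcation; this exit edge will serve as the final edge $f_n$.

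The main obstacle, and the only place where the argument is delicate, is the first stage: showing that $v$ reaches some vertex of $q$, i.e. $v\geq v_k$ for some $k$. This is precisely cofinality of $E$ with respect to the infinite path $q$, and it genuinely requires more than ``no sinks'': a vertex lying on a cycle disjoint from $q$ would reach no vertex of $q$ at all, and for such a vertex the conclusion fails. In the setting where the lemma is applied, however, $E$ is row-finite and $\emptyset, E^{0}$ are its only hereditary saturated subsets (as in Theorem~\ref{Graphical conditions}), and for row-finite graphs this is exactly the statement that every vertex connects to every infinite path; applied to $q$ it produces a finite path $\mu$ with $s(\mu)=v$ and $r(\mu)=v_k=s(e_k)$.

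Granting this, I would use the hypothesis that $q$ has infinitely many bifurcating vertices to pick $m\geq k$ with $v_m$ a bifurcation, and then choose an edge $f$ with $s(f)=v_m$ and $f\neq e_m$. Setting $\gamma=\mu\,e_k e_{k+1}\cdots e_{m-1}\,f$ produces a finite path with $s(\gamma)=v$ whose last edge is $f$; when $v$ already lies on $q$ one takes $\mu$ trivial, and when $k=m$ the middle stretch $e_k\cdots e_{m-1}$ is empty, so in every case $\gamma$ has length at least $1$.

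It then remains to check that $f$ is off $q$ and off every closed path. The key preliminary observation is that the vertices $v_1,v_2,\dots$ on $q$ are pairwise distinct: if $v_i=v_j$ for some $i<j$, then $e_i\cdots e_{j-1}$ would be a closed path based at $v_i$, contradicting the assumption that no vertex of $q$ lies on a closed path. From this distinctness, if $f=e_i$ for some $i$, then $s(e_i)=s(f)=v_m$ forces $v_i=v_m$, hence $i=m$ and $f=e_m$, against the choice of $f$; thus $f\neq e_i$ for all $i$. Finally, were $f$ an edge of some closed path, its source $v_m$---a vertex of $q$---would lie on a closed path, again a contradiction. Hence $\gamma$ has all the required properties.
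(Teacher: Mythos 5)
Your construction and the final verifications (pairwise distinctness of the vertices $v_i$ on $q$, hence $f\neq e_i$ for all $i$; the source of $f$ lying on $q$, hence $f$ off every closed path) coincide with the paper's where the two arguments overlap, but your overall route is genuinely different: the paper never connects $v$ to $q$ at all. Its proof takes an arbitrary edge $f_{1}$ with $s(f_{1})=v$ (possible since $E$ has no sinks) and splits into two cases. If $f_{1}$ is not an edge of $q$, it simply sets $\gamma:=f_{1}$ and stops. If $f_{1}=e_{i}$ for some $i$, then $v=v_{i}$ lies on $q$, and one rides $q$ to the next bifurcating vertex and exits along the bifurcating edge --- exactly your second and third stages. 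So the cofinality step that you identify as the delicate point is deliberately sidestepped in the paper, and no hypothesis beyond those stated in the lemma is invoked there.

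That said, your diagnosis is on target, and in fact it exposes a real lacuna in the paper's first branch: when $\gamma:=f_{1}$ with $f_{1}$ off $q$, the conclusion $f_{n}\neq e_{i}$ holds trivially, but the clause that $f_{n}$ is not part of a closed path is never checked, and it is precisely this clause that fails in your example of a vertex on a cycle disjoint from $q$ (there the only available $f_{1}$ is a cycle edge). So the lemma as literally stated is false under the hypotheses given, and some ambient assumption is needed; the paper's proof even betrays this by opening with ``Since $E$ has no line points,'' a hypothesis absent from the statement and inherited from the context of \cite{RT}. Your repair --- row-finiteness plus triviality of the hereditary saturated subsets, which for row-finite graphs does yield that every vertex connects to every infinite path --- is correct and is available in the application inside Theorem \ref{Only one graded-simple}, where Theorem \ref{Graphical conditions} has already established those conditions. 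Be aware, however, that your amended hypothesis is not available verbatim in the lemma's other application, Lemma \ref{NE cycle or line points}, where the graph is not known to be downward directed; there the paper's case split still delivers the last-edge-off-$q$ clause with no cofinality, while the closed-path clause must be recovered from the ambient countability assumption (via Lemma \ref{Disjoint cycles}(i): distinct cycles are disjoint, which forces exit edges of cycles to avoid closed paths). In short, yours is a correct proof of an amended statement --- more careful than the paper's own proof on the closed-path clause, but purchased with a hypothesis the paper's dichotomy is designed to avoid.
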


\begin{proof}
Since $E$ has no line points, $E$ has no sinks. Since $v$ is not a sink, there
exists an edge $f_{1}$ with $s(f_{1})=v$. If $f_{1}$ is not an edge on $q$, we
may let $\gamma:=f_{1}$. If instead $f_{1}$ is an edge on $q$, then, for some
$n>1$, we may extend the edge $f_{1}$ to a finite path $f_{1}\cdot\cdot\cdot
f_{n-1}$ on $q$ with $r(f_{n-1})$ is a bifurcating vertex on $q$ , say $v_{j}$
for some $j\geq1$. Denote the bifurcating edge at $v_{j}$ by $f_{n}$. Since
the path $q$ does not go through any closed paths, $f_{n}\neq e_{i}$ for all
$i\geq1$. Then the path $\gamma:=f_{1}\cdot\cdot\cdot f_{n}$ satisfies the
desired condition.
\end{proof}

The following Lemma \ref{Disjoint cycles} is\ also used in the next section.
Its first statement (i) is from \cite{AR} and, for the sake of completeness,
we give its proof.

\begin{lemma}
\label{Disjoint cycles} If $L=L_{\mathbb{K}}(E)$ has at most countably many
non-isomorphic graded simple left modules, then we have the following.

(i) Distinct cycles in $E$ are disjoint, that is, they have no common vertex.

(ii) If $p$ is an infinite irrational path, then at most finitely many
vertices on $p$ are bases of finitely many distinct closed paths and thus
$p=\gamma q$ where $q$ is an infinite irrational path containing infinitely
many bifurcating vertices, and no vertex on the path $q$ lies on a closed path
and $\gamma$ is a finite path containing \ at most finitely many closed paths.
\end{lemma}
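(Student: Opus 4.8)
The plan is to prove the two statements separately, using the hypothesis that $L$ has only countably many isomorphism classes of graded-simple modules to rule out "too many" distinct infinite irrational paths via Theorem \ref{Chen}(c) and Theorem \ref{HR}(iii).

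For part (i), suppose for contradiction that two distinct cycles $c$ and $d$ share a common vertex $v$. The key observation is that once two distinct cycles meet, we can form uncountably many pairwise non-tail-equivalent infinite irrational paths by stringing together $c$'s and $d$'s in different patterns. Concretely, I would note that from $v$ one can traverse either $c$ or $d$ and return to $v$ (possibly after reindexing so both are based at $v$); then for each infinite binary sequence $\epsilon = (\epsilon_1, \epsilon_2, \dots) \in \{0,1\}^{\mathbb{N}}$ form the infinite path $p_\epsilon$ that follows $c$ when $\epsilon_i = 0$ and $d$ when $\epsilon_i = 1$. The main point to verify is that each such $p_\epsilon$ is irrational (it is not eventually periodic, provided $\epsilon$ is not eventually constant, because $c \neq d$ as cycles) and that two such paths $p_\epsilon, p_{\epsilon'}$ with $\epsilon \neq \epsilon'$ differing in infinitely many places are \emph{not} tail-equivalent. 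Since there are uncountably many such binary sequences (up to the tail relation), Theorem \ref{Chen}(c) together with Theorem \ref{HR}(iii) yields uncountably many non-isomorphic graded-simple modules $V_{[p_\epsilon]}$, contradicting the countability hypothesis.

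For part (ii), I start from an infinite irrational path $p$. The claim has two pieces: that only finitely many vertices on $p$ are bases of closed paths, and the resulting decomposition $p = \gamma q$. First I would argue that by part (i) the cycles in $E$ are disjoint, so any vertex on $p$ lies on at most one cycle. If infinitely many vertices on $p$ were bases of distinct closed paths, each such base provides a genuine branching opportunity (follow $p$, or detour around the closed path), and I would again manufacture uncountably many pairwise non-tail-equivalent irrational paths by choosing independently at each such base whether to insert the detour — contradicting countability just as in part (i). Hence only finitely many vertices on $p$ are bases of (finitely many) closed paths; writing $\gamma$ for the initial segment of $p$ up through the last such vertex gives $p = \gamma q$ with $q$ avoiding all closed-path vertices. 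Finally, since $p$ is irrational, $q$ cannot be eventually a single straight line (that would make $p$ rational or force a sink), so $q$ must contain infinitely many bifurcating vertices.

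The main obstacle, in both parts, will be verifying the \emph{non-tail-equivalence} of the constructed family carefully enough to get genuinely uncountably many isomorphism classes, rather than merely infinitely many. The subtlety is that tail-equivalence only cares about eventual agreement, so I must ensure the encoding of $\{0,1\}^{\mathbb{N}}$ into paths is injective \emph{up to the tail relation}: two sequences that agree eventually should be identified, but a standard diagonal/cardinality argument shows the quotient of $\{0,1\}^{\mathbb{N}}$ by the eventual-agreement relation is still uncountable, which suffices. I would also need the easy structural remark that distinct cycles $c \neq d$ sharing a vertex cannot be powers of a common closed path, so that the interleaved paths are honestly aperiodic; this is where the definition of \emph{cycle} (distinct source vertices) does the work.
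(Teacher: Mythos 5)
Your strategy coincides with the paper's own proof in both parts: encode infinite binary sequences (equivalently, infinite subsets of $\mathbb{N}$) into interleaved traversals of the two cycles $c,d$ in part (i), and into choices of detours through the closed paths based on $p$ in part (ii) (the paper's variant fixes a baseline path through all the $c_i$ with multiplicities $k_i$ and bumps each multiplicity to $k_i+1$ on a chosen subset $S$, which is the same combinatorial idea as your insert-or-omit scheme), then apply Theorem \ref{Chen}(c) together with Theorem \ref{HR}(iii) to get uncountably many pairwise non-isomorphic graded-simple modules $V_{[p]}$, contradicting countability. Your explicit remark that one must pass to the quotient of $\{0,1\}^{\mathbb{N}}$ by eventual agreement, which remains uncountable because each class is countable, is in fact more careful than the paper, which asserts outright that no two of the paths $p_S$ are tail-equivalent (false when $S$ and $S'$ agree from some index on, or when one coding sequence is a shift of the other).

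Two local steps fail as written, though neither damages the cardinality argument at its core. First, in (i) you claim $p_\epsilon$ is irrational provided $\epsilon$ is not eventually constant: this is wrong, since $\epsilon=0101\cdots$ yields $p_\epsilon=cdcd\cdots=(cd)(cd)\cdots$, a rational path. The correct exclusion is that $\epsilon$ not be eventually periodic; since the eventually periodic sequences are countable, discarding them still leaves uncountably many classes, so the fix is immediate but should be stated. Second, your justification of the final clause of (ii) --- that $q$ "cannot be eventually a single straight line (that would make $p$ rational or force a sink)" --- is incorrect: an infinite path that is eventually a bifurcation-free, cycle-free ray is irrational and need never meet a sink; what such a tail actually contains is a line point. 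The bifurcation clause genuinely requires the hypothesis that no vertex on $p$ is a line point, which is exactly how the lemma is used in the paper (both in the proof of Theorem \ref{Only one graded-simple} and in Lemma \ref{NE cycle or line points}, infinitely many bifurcating vertices are first deduced from the absence of line points, and the paper's own "Clearly, we can then write $p=\gamma q$\dots" silently relies on this). So you should either import that standing assumption or observe that if $q$ is eventually bifurcation-free then its tail consists of line points, which is the case the applications have already excluded.
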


\begin{proof}
(i) Suppose, on the contrary, there are two cycles $c,d$ in $E$ having a
common vertex. We wish to construct uncountably many infinite irrational paths
in $E$. For each infinite subset $S$ of $\mathbb{N}$, define an infinite path
$p_{S}=p_{1}p_{2}p_{3}\cdot\cdot\cdot$ where, for each $i\in\mathbb{N}$,
$p_{i}=c$ or $d$ according as $i\in S$ or not. Then the set $\{p_{S}:S$ an
infinite subset of $\mathbb{N}\}$ consists of an uncountable number of
infinite irrational paths in $E$ no two of which are tail-equivalent. Then, by
Theorem \ref{Chen}(c) and Theorem \ref{HR}(ii), there will be uncountably many
pair-wise non-isomorphic graded-simple $L$-modules, a contradiction. This
proves (i).

(ii) Suppose, by way of contradiction, there are infinitely many distinct
closed paths $c_{1}c_{2}\cdot\cdot\cdot c_{n}\cdot\cdot\cdot$ each of which is
based at a vertex on the path $p$. Then we can construct an infinite
irrational path $q$ passing through each of these paths $c_{i}$. For each
$i\in\mathbb{N}$, suppose the path $q$ goes through $c_{i}$, $k_{i} $ times,
where the $k_{i}$ are positive integers. For each infinite subset $S $ of
$\mathbb{N}$, define an infinite irrational path $q_{S}$ which goes through
the closed path $c_{i}$, $k_{i}+1$ times if $i\in S$ and $k_{i}$ times if
$i\notin S$. Then we obtain uncountably many infinite irrational paths $q_{S}$
which are mutually not trail-equivalent and this give rise to uncountable
number of pair-wise non-isomorphic graded-simple \ $L$-modules, a
contradiction. Thus at most a finite number of vertices on $p$ are bases of
closed paths. Clearly, we can then write $p=\gamma q$ where $q$ is an infinite
irrational path containing infinitely many bifurcating vertices, and no vertex
on the path $q$ lies on a closed path and $\gamma$ is a finite path containing
\ at most finitely many closed paths. This proves (ii).
\end{proof}





Now we are ready to prove the main theorem of this section.

\begin{theorem}
\label{Only one graded-simple} Suppose $L=L_{\mathbb{K}}(E)$ is a Leavitt path
algebra of an arbitrary graph $E$ over a field $\mathbb{K}$. Then the
following properties are equivalent for $L$:

(a) Any two graded-simple left $L$-modules are graded-isomorphic.

(b) The graph $E$ is row-finite, downward directed and $E^{0}$ is the
hereditary saturated closure of a single vertex $v$ which is either a line
point, or lies on a cycle without exits in $E$.

(c) $L\cong_{gr}M_{\Lambda}(\mathbb{K})(\delta)$ or $L\cong_{gr}M_{\Upsilon
}(\mathbb{K}[x^{m},x^{-m}])(\bar{\delta})$ with appropriate matrix gradings,
where $\Lambda$, $\Upsilon$ are suitable index sets.

\end{theorem}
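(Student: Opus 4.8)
The plan is to prove the cyclic chain of implications (a) $\Rightarrow$ (b) $\Rightarrow$ (c) $\Rightarrow$ (a), with Theorem \ref{Graphical conditions} as the entry point and Theorem \ref{Generalized Hazrat} as the engine that produces the matrix form. Since (a) forces a single isomorphism class of graded-simple modules, in particular at most countably many, Lemmas \ref{cofinal} and \ref{Disjoint cycles} are available throughout.

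For (a) $\Rightarrow$ (b), I would first invoke Theorem \ref{Graphical conditions} to record that $E$ is row-finite, downward directed, and that $\emptyset, E^{0}$ are the only hereditary saturated subsets of $E^{0}$; the argument there also shows $L$ is graded-simple. It then remains to locate a single vertex $v$ that is a line point or lies on a cycle without exits, for then the hereditary saturated closure $\overline{\{v\}}$ is nonempty and hence equals $E^{0}$, giving (b). Equivalently, by Theorem \ref{graded socle} I must show $\Soc_{gr}(L)\neq 0$; since $L$ is graded-simple and $\Soc_{gr}(L)$ is a graded ideal, the only alternative is $\Soc_{gr}(L)=0$, i.e. $E$ has no line points and no cycles without exits. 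I would rule this out by contradiction with (a): under these hypotheses $E$ has no sinks, and the tree of every vertex contains a bifurcation (a cycle, if present, must carry an exit), so infinite paths meeting infinitely many bifurcation vertices can be grown. Using Lemma \ref{Disjoint cycles} (distinct cycles are disjoint, and an irrational path splits as $\gamma q$ with $q$ bifurcating infinitely often away from any closed path) together with Lemma \ref{cofinal}, I would construct two infinite irrational paths that are not tail-equivalent; Theorem \ref{Chen}(c) and Theorem \ref{HR}(iii) then furnish two non-graded-isomorphic graded-simple modules, contradicting (a).

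For (b) $\Rightarrow$ (c), I would verify the hypotheses of Theorem \ref{Generalized Hazrat}: $E$ is row-finite and downward directed by (b), and since $E^{0}=\overline{\{v\}}$ is the closure of one vertex, downward directedness forces $\emptyset, E^{0}$ to be the only hereditary saturated subsets (any nonempty hereditary saturated $H$ meets some $T(w)$, slides down to a vertex below $v$, and saturation along the non-bifurcating line, or along the exit-free cycle, pushes $v$ and hence all of $\overline{\{v\}}$ into $H$). Then part (1) of Theorem \ref{Generalized Hazrat} with $v$ a line point gives $L\cong_{gr}M_{\Lambda}(\mathbb{K})(\bar p)$, and part (2) with $v$ on a cycle without exits of length $m$ gives $L\cong_{gr}M_{\Upsilon}(\mathbb{K}[x^{m},x^{-m}])(\bar q)$, which is (c). For (c) $\Rightarrow$ (a), I would use that both $\mathbb{K}$ (trivially graded) and $\mathbb{K}[x^{m},x^{-m}]$ (with $x^{m}$ in degree $m$) are graded division rings, so by the infinite-index analogue of Lemma \ref{RepresentFinite matrix} the ring $M_{\Lambda}(D)(\bar\delta)$ is the graded endomorphism ring of a graded free module over a graded division ring; over such a ring every graded-simple module is a shift of the single column module, whence any two graded-simple modules are graded-isomorphic (in the up-to-shift sense in which the columns decomposing $L$ are themselves graded-isomorphic), giving (a).

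I expect the main obstacle to be the construction inside (a) $\Rightarrow$ (b): producing two infinite irrational paths guaranteed not to be tail-equivalent in a downward directed, row-finite graph having no line points and no exit-free cycles. The delicacy is that downward directedness tends to force paths to share common descendants, so the two paths must be arranged to diverge infinitely often—exploiting the infinitely many available bifurcations and the disjointness of cycles supplied by Lemma \ref{Disjoint cycles}—so that they never eventually coincide.
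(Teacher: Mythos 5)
Your proposal is correct and follows essentially the same route as the paper: (a)$\Rightarrow$(b) via Theorem \ref{Graphical conditions} together with the dichotomy ``line point or exit-free cycle'' settled by building, with Lemmas \ref{cofinal} and \ref{Disjoint cycles}, a second infinite irrational path not tail-equivalent to a given one; (b)$\Rightarrow$(c) via Theorem \ref{Generalized Hazrat}; and (c)$\Rightarrow$(a) via the graded-semisimple decomposition into shift-equivalent graded-simple columns (Theorem \ref{graded socle}). Your explicit verification that (b) forces $\emptyset$ and $E^{0}$ to be the only hereditary saturated subsets (a hypothesis of Theorem \ref{Generalized Hazrat} the paper leaves implicit) is a welcome detail but does not change the approach.
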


\begin{proof}
Assume (a). By Theorem \ref{Graphical conditions}, the graph $E$ is
row-finite, downward directed and $E^{0}$ and $\emptyset$ are the only
hereditary saturated subsets of vertices in $E$.

We wish to show that $E$ contains either a line point or a cycle without
exits. If $E$ contains a cycle $c$ without exits based at a vertex $v$, or if
$E$ contains a sink $v$, then the hereditary saturated closure of
$\{v\}=E^{0}$ and we are done. So assume that every cycle in $E$ has an exit
and that $E$ contains no sinks.

Then every vertex in $E$ is on an infinite path. Since every cycle has an
exit, infinite irrational paths do exist in $E$. By Theorem \ref{Chen},
infinite irrational paths in $E$ give rise to simple left $L$-modules, each of
which is graded-simple by Theorem \ref{HR}. Also, by Theorem \ref{Chen}, if
there are two infinite irrational paths which are not tail-equivalent, they
would give rise to non-isomorphic (hence not graded-isomorphic) graded-simple
modules. By hypothesis, any two graded-simple left $L$-modules are
graded-isomorphic and so all the infinite irrational paths in $E$ form a
single tail-equivalent class.

Let $p$ be an infinite irrational path. We wish to show that $p$ contains a
line point. Assume, by way of contradiction, that no vertex on $p$ is a line
point. Then there will be infinitely many bifurcating vertices on $p$.

By Lemma \ref{Disjoint cycles}, there are at most finite number of vertices on
$p$ which are bases for a finite number of closed paths. Then we can write
$p=\gamma q$ where $q$ is an infinite irrational path containing infinitely
many bifurcating vertices, and no vertex on the path $q$ lies on a closed path
and $\gamma$ is a finite path containing at most finitely many closed paths.

Let $q=e_{1}e_{2}e_{3}....$ with $s(e_{i})=$ $v_{i}$, for all $i\geq1$. We
wish to construct an infinite irrational path not tail-equivalent to $q$.
Start with a vertex $v_{1}$ on $q$. By Lemma \ref{cofinal}, there is a path
$\alpha_{1}$ with $s(\alpha_{1})=v_{1}$ and its last edge $f^{(1)}\neq e_{i} $
for all $i\geq1$. Now $\alpha_{1}$ is not a closed path, since $v_{1}$ does
not lie on a closed path. Apply Lemma \ref{cofinal} to $r(f^{(1)})=v_{2} $ to
get a path $\alpha_{2}$ with $s(\alpha_{2})=v_{2}$ and its last edge
$f^{(2)}\neq e_{i}$ for all $i\geq1$. If $\alpha_{2}$ is not a closed path
apply Lemma \ref{cofinal} to $r(\alpha_{2})$ to get a path $\alpha_{3}$ with
$s(\alpha_{3})=r(\alpha_{2})$ and its last edge is not an edge in\ $q$. If
$\alpha_{2}$ is a closed path, then by assumption, it has an exit, say $g_{2}%
$. Now $r(g_{2})$ does not lie on the path $\alpha_{2}$, since, otherwise,
Lemma \ref{Disjoint cycles}(i) would give rise to uncountably many irrational
paths not tail-equivalent to each other, a contradiction to our assumption.
Now apply Lemma \ref{cofinal} to $r(g_{2})$ to obtain a path $\alpha_{3}$ with
$s(\alpha_{3})=r(g_{2})$ and its last edge is not an edge in $q$. Proceeding
like this, we obtain an infinite path $q^{\prime}=\alpha_{1}\alpha_{2}%
\cdot\cdot\cdot\alpha_{n}\cdot\cdot\cdot$ which is not rational by
construction. Clearly, $q^{\prime}$ is not tail-equivalent to $q$. \ This
contradiction shows that $E$ contains a line \ point $u$ in which case $E^{0}$
is the hereditary saturated closure of $\{u\}$. This proves (b).

Now (b)$\implies$(c) follows from Theorem \ref{Generalized Hazrat}.

Assume (c). If $L\cong_{gr}M_{\Lambda}(\mathbb{K})(\bar{p})$ or if
$L\cong_{gr}M_{\Lambda}(\mathbb{K}[x^{m},x^{-m}])(\bar{p})$ with indicated
gradings, then, by Theorem \ref{graded socle}, $L$ is graded-semisimple being
a graded direct sum of isomorphic copies of $\mathbb{K}$ with trivial grading
or a graded direct sum of isomorphic copies of $\mathbb{K}[x,x^{-1}]$ with its
natural grading. In this case, it is clear that all the graded-simple
left/right $L$-modules are graded isomorphic. This proves (a).

\end{proof}

\begin{remark}
The conditions above are also equivalent to the following two conditions: (i) $L$ is a graded-simple ring which is graded-self-injective (by Theorem 6.17, \cite{HR}), (ii) $L$ is a graded-simple ring which is graded-semisimple (by Theorem \ref{graded socle}).
\end{remark}

It is possible for a Leavitt path algebra $L_{\mathbb{K}}(E)$ to have exactly
one graded-isomorphism class of graded-simple left modules, but have
infinitely non-isomorphic simple left modules. An easy example is given below.

\begin{example} \rm
Let $E$ be the graph consisting of a single loop $c$ based at a vertex
$v$ as shown below:
\[
\begin{tikzcd} v \arrow["c"', from=1-1, to=1-1, loop, in=55, out=125, distance=10mm] \end{tikzcd}
\]

\noindent Then $L_{\mathbb{K}}(E)\cong_{gr}\mathbb{K}[x,x^{-1}]$ under
the map sending $v\mapsto1,c\mapsto x$ and $x^{\ast}\mapsto x^{-1}$. Now
$\mathbb{K}[x,x^{-1}]$ is itself a graded-simple $\mathbb{K}[x,x^{-1}]$-module
and thus has exactly one graded-isomorphism class of graded-simple modules.
Since $\mathbb{K}[x,x^{-1}]$ is a principal ideal domain, for each of the
infinitely many irreducible polynomial $p(x)$ in $\mathbb{K}[x,x^{-1}] $,
$<p(x)>$ is a (non-graded) maximal ideal and we get a simple module
$\mathbb{K}[x,x^{-1}]/<p(x)>$. Thus $L_{\mathbb{K}}(E)$ has, up to a graded
isomorphism, only one graded-simple left module, but has infinitely many
non-graded simple module of the form $L_{\mathbb{K}}(E)/<p(c)>$ corresponding
to the infinitely many irreducible polynomials $p(x)\in\mathbb{K}[x,x^{-1}]$.

\end{example}

\begin{example} \rm
Consider the graph $E$ below. 

\bigskip

\begin{tikzcd}
	{.} & {.} & {.} & {.} \\
	& {.} & {.} & {.} & {.} \\
	&& \bullet & \bullet & \bullet & \bullet \\
	&& v & \bullet & \bullet & \bullet & \bullet
	\arrow[dashed, from=1-1, to=2-2]
	\arrow[dashed, from=1-2, to=2-3]
	\arrow[dashed, from=1-3, to=2-4]
	\arrow[dashed, from=1-4, to=2-5]
	\arrow[from=2-2, to=3-3]
	\arrow[from=2-3, to=3-4]
	\arrow[from=2-4, to=3-5]
	\arrow[from=2-5, to=3-6]
	\arrow[from=3-3, to=4-4]
	\arrow[from=3-4, to=4-5]
	\arrow[from=3-5, to=4-6]
	\arrow[from=3-6, to=4-7]
	\arrow[from=4-3, to=4-4]
	\arrow[from=4-4, to=4-5]
	\arrow[from=4-5, to=4-6]
	\arrow[from=4-6, to=4-7]
\end{tikzcd}

\noindent Clearly this graph $E$ is row finite, downward directed and $E^0$ is the hereditary saturated closure of vertex $v$ which is a line point. Therefore, by the above theorem $L_{\mathbb K}(E)$ has a unique isomorphism class of graded simple modules.

\end{example}

\begin{example} \rm 

Consider the graph $E$ below.

\smallskip

\begin{tikzcd}
	{.} & {.} & \bullet & \bullet & v & \bullet \\
	{.} & {.} & \bullet & \bullet & \bullet & \bullet
	\arrow[dashed, from=1-1, to=1-2]
	\arrow[dashed, from=1-2, to=1-3]
	\arrow[from=1-3, to=1-4]
	\arrow[from=1-4, to=1-5]
	\arrow[from=1-5, to=1-6]
	\arrow[from=1-6, to=2-6]
	\arrow[dashed, from=2-1, to=2-2]
	\arrow[dashed, from=2-2, to=2-3]
	\arrow[from=2-3, to=2-4]
	\arrow[from=2-4, to=2-5]
	\arrow[from=2-5, to=1-5]
	\arrow[from=2-6, to=2-5]
\end{tikzcd}

\noindent Clearly this graph $E$ is row finite, downward directed and $E^0$ is the hereditary saturated closure of vertex $v$ which lies on a cycle without exits. Therefore, by the above theorem $L_{\mathbb K}(E)$ has a unique isomorphism class of graded simple modules.

\end{example}

\section{Leavitt path algebras having countably many non-isomorphic
graded-simple modules}

\noindent In this section, we give a complete description of the Leavitt path
algebra $L=L_{\mathbb{K}}(E)$ of an arbitrary graph $E$ over a field
$\mathbb{K}$ having at most countably many graded-simple left modules which
are pair-wise not graded-isomorphic. Such a Leavitt path algebra $L$ is shown
to be the union of a smooth ascending chain of graded ideals%
\[
0=L_{0}\subseteq L_{1}\subseteq\cdot\cdot\cdot\subseteq L_{\alpha}\subseteq
L_{\alpha+1}\subseteq\cdot\cdot\cdot\qquad\qquad(\alpha<\tau)
\]
where $\tau$ is a countable ordinal, where for each $0\leq\alpha<\tau$,
$L_{\alpha+1}/L_{\alpha}\cong_{gr}(\oplus_{i\in X}M_{\Lambda_{i}}%
(\mathbb{K})(\bar{\delta}_{i}))\oplus(\oplus_{j\in Y}M_{\Upsilon_{i}%
}(\mathbb{K})(\bar{\sigma}_{j}))$ where $X,Y,\Lambda_{i},\Upsilon_{j}$ are
suitable index sets.

\begin{lemma}
\label{NE cycle or line points} Suppose $L=L_{\mathbb{K}}(E)$ has at most
countably many non-isomorphic graded simple left modules. Then the graph
contains line points or cycles without exits.
\end{lemma}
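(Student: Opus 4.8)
The plan is to prove the contrapositive: assuming that $E$ contains \emph{no} line point and \emph{no} cycle without exits, I will manufacture uncountably many pairwise non-tail-equivalent irrational infinite paths in $E$. By Theorem \ref{Chen}(c) together with Theorem \ref{HR}(iii), distinct tail-equivalence classes of irrational paths give rise to pairwise non-isomorphic (hence non-graded-isomorphic) graded-simple left $L$-modules, so producing uncountably many such classes contradicts the hypothesis that $L$ has at most countably many non-isomorphic graded-simple modules. First I would record the two standing consequences of the assumption: since a sink is a line point, $E$ has no sinks, so every vertex emits at least one edge; and every cycle has an exit.

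The key structural lemma I would establish is a \emph{forced-path} statement: starting at any vertex and repeatedly following the unique out-edge whenever the current vertex is non-bifurcating, one reaches a first bifurcation vertex after finitely many steps. Indeed, if the forced deterministic path never met a bifurcation it would be an infinite ray $y_{0}\to y_{1}\to\cdots$ through non-bifurcating vertices; it cannot repeat a vertex, for a repetition would produce a closed path and hence a cycle all of whose vertices are non-bifurcating, contradicting the hypothesis that every cycle has an exit (an exit is precisely a second out-edge, i.e.\ a bifurcation). Thus $T(y_{0})$ would equal the set $\{y_{0},y_{1},\dots\}$, containing neither a bifurcation nor a cycle, making $y_{0}$ a line point -- a contradiction. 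This lemma uses both hypotheses in an essential way and replaces the need for Lemma \ref{cofinal} and Lemma \ref{Disjoint cycles} in this direction.

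Next I would code infinite paths by their bifurcation choices. Fix any bifurcation vertex $b_{0}$ (reachable from an arbitrary vertex by the forced-path lemma) and, for each $\sigma=(\sigma_{1},\sigma_{2},\dots)\in\{0,1\}^{\mathbb{N}}$, build $p_{\sigma}$ recursively: at each bifurcation vertex encountered, choose one of two fixed out-edges according to the next bit of $\sigma$, and then run the forced path to the next bifurcation vertex. Because the connecting forced segments pass only through non-bifurcating vertices, the bifurcation vertices lying on $p_{\sigma}$ are \emph{exactly} the decision points, and the induced code (the ordered sequence of choices made at the bifurcations visited) equals $\sigma$. The crucial observation is that the code respects tail-equivalence: if $p_{\sigma}\sim p_{\sigma'}$ then, from the point where their edge-sequences coincide onward, they visit the same bifurcation vertices and make the same choices, so $\sigma$ and $\sigma'$ agree from some index on. Likewise, if $p_{\sigma}$ were rational, say $p_{\sigma}\sim ccc\cdots$, then $\sigma$ would equal the code of $ccc\cdots$, which is eventually periodic (the same choices recur once per traversal of $c$, and $c$ has a bifurcation since it has an exit). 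Hence for every \emph{non-eventually-periodic} $\sigma$ the path $p_{\sigma}$ is irrational, and two such $\sigma,\sigma'$ lying in different eventual-equality classes yield non-tail-equivalent $p_{\sigma},p_{\sigma'}$. Since eventual-equality classes are countable while the non-eventually-periodic sequences form an uncountable set, this produces uncountably many pairwise non-tail-equivalent irrational paths, completing the contradiction.

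The hard part, and the place I would spend real care, is the coding step: everything hinges on arranging that no \emph{unrecorded} bifurcations occur between consecutive decision points, so that the code of $p_{\sigma}$ is literally $\sigma$ and the ``code respects tail-equivalence'' argument goes through cleanly. This is exactly what the forced-path lemma guarantees, and it is the one point where the two hypotheses (no line points, every cycle has an exit) are used in full. The remaining verifications -- that non-eventual-periodicity forces irrationality, and the cardinality count showing uncountably many eventual-equality classes -- are routine once the coding is set up correctly.
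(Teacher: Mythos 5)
Your proposal is correct in substance but takes a genuinely different route from the paper's. The paper argues by diagonalization against a countable list: it fixes representatives $\alpha_{1},\alpha_{2},\dots$ of the (at most countably many) tail-equivalence classes of infinite irrational paths, invokes Lemma \ref{Disjoint cycles}(ii) to arrange that no vertex on any $\alpha_{i}$ lies on a closed path, and then applies Lemma \ref{cofinal} repeatedly in a diagonal pattern to build a single irrational path $\beta=p_{11}p_{21}p_{22}p_{31}\cdots$ in which, for each fixed $j$, the last edge of $p_{ij}$ avoids the edges of $\alpha_{j}$ at infinitely many positions, so that $\beta$ is tail-equivalent to no $\alpha_{i}$ --- contradicting countability. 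You instead prove the contrapositive by producing uncountably many pairwise non-tail-equivalent irrational paths via a Cantor-style binary coding at bifurcation vertices, with your forced-path lemma (from any vertex the deterministic path reaches a bifurcation in finitely many steps, since otherwise one gets either an exitless cycle or a line point) playing the role that Lemma \ref{cofinal} plays in the paper. Your route buys something real: it bypasses Lemma \ref{Disjoint cycles} entirely (whose own proof uses the countability hypothesis) and isolates a purely graph-theoretic dichotomy --- no line points and no exitless cycles forces uncountably many tail-equivalence classes of irrational paths --- from which the lemma follows immediately via Theorem \ref{Chen}(c) and Theorem \ref{HR}(iii). The paper's version, in turn, reuses machinery (Lemmas \ref{cofinal} and \ref{Disjoint cycles}) that it needs anyway in Theorem \ref{Only one graded-simple}. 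Your verification that non-eventually-periodic codes yield irrational paths is also sound, including the observation that a cycle underlying a rational tail must carry a bifurcation because it has an exit.

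One step needs a small repair. Tail-equivalence of $p_{\sigma}$ and $p_{\sigma'}$ only gives agreement of the codes up to a shift: there exist $k,l$ with $\sigma_{k+i}=\sigma'_{l+i}$ for all $i\geq 0$, which is weaker than your stated conclusion that $\sigma$ and $\sigma'$ ``agree from some index on.'' Consequently, choosing representatives from distinct eventual-equality classes does not by itself guarantee pairwise non-tail-equivalent paths (a sequence and its shift typically lie in different eventual-equality classes). The fix is to quotient $\{0,1\}^{\mathbb{N}}$ by the coarser relation of shift-eventual agreement; each such class is still countable (a countable union over pairs of offsets of finite sets of prefixes), so after discarding the countably many eventually periodic sequences there remain uncountably many classes, and your argument then goes through verbatim.
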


\begin{proof}
Assume, by way of contradiction, that the graph $E$ contains no line points
and no cycles without exits. In particular, $E$ has no sinks. As every cycle
has an exit, every vertex sits on an infinite path which is irrational. Since
there are no line points, every infinite irrational path will have infinitely
many bifurcating vertices. Let $\alpha_{1},\alpha_{2},\cdot\cdot\cdot
\alpha_{n},\cdot\cdot\cdot$ be a listing of representatives of the countably
many equivalent classes of tail-equivalent infinite irrational paths in $E$.
In view of Lemma \ref{Disjoint cycles} (ii), we may assume that no vertex on
each of the path $\alpha_{i}$ is the base of a closed path. We wish to
construct an infinite irrational path not tail-equivalent to any of these
paths $\alpha_{i}$ by using Lemma \ref{cofinal} repeatedly. Start with a
vertex $v$. By Lemma \ref{cofinal}, there is a path $p_{11}$ with
$s(p_{11})=v$ such that the last edge of $p_{11}$ is not an edge in the path
$\alpha_{1}$. By Lemma \ref{cofinal}, there is a path $p_{21}$ with
$s(p_{21})=r(p_{11})$ such that the final edge of $p_{21}$ is not an edge of
the path $\alpha_{1}$. Applying Lemma \ref{cofinal} \ to $r(p_{21})$, we have
a path $p_{22}$ with $s(p_{22})=r(p_{21})$ such that the last edge of $p_{22}$
is not an edge in the path $\alpha_{2}$. Again, there is a path $p_{31}$ with
$s(p_{31})=r(p_{22})$ such that the last edge of $p_{31}$ is not an edge in
the path $\alpha_{1}$. Let $p_{32}$ be a path with $s(p_{32})=r(p_{31})$ such
that the last edge of $p_{32}$ is not an edge in the path $\alpha_{2}$. Let
$p_{33}$ be a path $s(p_{33})=r(p_{32})$ such that the last edge of $p_{33}$
is not an edge of the path $\alpha_{3}$. Again, let $p_{41}$ be a path with
$s(p_{41})=r(p_{33})$ such that the last edge of $p_{41}$ is not an edge in
the path $\alpha_{1}$. Let $p_{42}$ be a path with $s(p_{42})=r(p_{41})$ such
that the last edge of $p_{42}$ is not an edge in the path $\alpha_{2}$. The
path $p_{43}$ satisfies $s(p_{43})=r(p_{42})$ with last edge of $p_{43}$ is
not an edge in the path $\alpha_{3}$. Likewise, we can get a path $p_{44}$
with $s(p_{44})=r(p_{43})$ such that the last edge of $p_{44}$ is not an edge
in the path $\alpha_{4}$. Proceeding like this, we get an infinite irrational
path%
\[
\beta=p_{11}p_{21}p_{22}p_{31}p_{32}p_{33}p_{41}p_{42}p_{43}p_{44}p_{51}%
\cdot\cdot\cdot
\]
where, for all $i,j\in%
\mathbb{N}
$, the final edge of $p_{ij}$is not an edge in the path $\alpha_{j}$. Clearly,
$\beta$ is not tail-equivalent to any of the paths $\alpha_{i},i\in%
\mathbb{N}
$. This contradiction shows that the $E$ contains line points or cycles
without exits.
\end{proof}

\begin{theorem}
\label{Graded CIRT} Let $E$ be an arbitrary graph and $\mathbb{K}$ be a field.
Then the following are equivalent for $L=L_{\mathbb{K}}(E):$

(a) There are at most countably many isomorphism classes of graded-simple left
$L$-modules.

(b) $L$ is the union of a smooth well-ordered ascending chain of graded
ideals
\[
0\subseteq I_{1}\subseteq\cdot\cdot\cdot\subseteq I_{\alpha}\subseteq
I_{\alpha+1}\subseteq\cdot\cdot\cdot\qquad\qquad(\alpha<\tau)\qquad\qquad
(\ast)
\]
where $\tau$ is a countable ordinal and for each $0\leq\alpha<\tau$,
$I_{\alpha+1}/I_{\alpha}\cong_{gr}(\oplus_{i\in X}M_{\Lambda_{i}}%
(\mathbb{K})(\bar{\delta}_{i}))\oplus(\oplus_{j\in Y}M_{\Upsilon_{j}%
}(\mathbb{K}[x^{m_{j}},x^{-m_{j}}])(\bar{\sigma}_{j}))$ with appropriate
matrix gradings, where $\Lambda_{i},\Upsilon_{j}$ are suitable index sets and
$X,Y $ are at most countable.
\end{theorem}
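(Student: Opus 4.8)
The plan is to prove both implications via a transfinite \emph{graded socle series}, the two key inputs being Theorem \ref{graded socle} (which identifies the graded socle and furnishes its matrix form) and Lemma \ref{NE cycle or line points} (which guarantees a nonzero graded socle whenever the countability hypothesis holds). For (a)$\Rightarrow$(b) I would build the chain by transfinite recursion. Set $I_0=0$. Given a graded ideal $I_\alpha\subsetneq L$, I would use the standard fact that $L/I_\alpha$ is graded isomorphic to the Leavitt path algebra of a quotient graph, together with the observation that the graded-simple $L/I_\alpha$-modules are exactly the graded-simple $L$-modules annihilated by $I_\alpha$; hence $L/I_\alpha$ again has at most countably many isomorphism classes of graded-simple modules. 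By Lemma \ref{NE cycle or line points} its quotient graph then contains a line point or a cycle without exits, so by Theorem \ref{graded socle} the graded socle $\Soc_{gr}(L/I_\alpha)$ is nonzero. I would define $I_{\alpha+1}$ to be the preimage in $L$ of $\Soc_{gr}(L/I_\alpha)$, so that $I_{\alpha+1}/I_\alpha=\Soc_{gr}(L/I_\alpha)$ carries exactly the displayed direct-sum-of-matrix-rings form and $I_{\alpha+1}\supsetneq I_\alpha$; at limit ordinals I would set $I_\lambda=\bigcup_{\alpha<\lambda}I_\alpha$, which keeps the chain smooth and keeps each $L/I_\lambda$ a Leavitt path algebra inheriting the countability hypothesis.

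The chain then strictly increases at every successor step until it reaches $L$, say at an ordinal $\tau$, giving $L=\bigcup_{\alpha<\tau}I_\alpha$. To bound $\tau$, I would extract from each nonzero layer a graded-simple module: a graded-simple $\Soc_{gr}(L/I_\alpha)$-module pulls back to a graded-simple $L$-module $M_\alpha$ with $I_\alpha M_\alpha=0$ and $I_{\alpha+1}M_\alpha=M_\alpha$. For $\alpha<\alpha'$ one has $I_{\alpha+1}\subseteq I_{\alpha'}$, so $I_{\alpha+1}$ annihilates $M_{\alpha'}$ but not $M_\alpha$, whence $M_\alpha\not\cong_{gr}M_{\alpha'}$. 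Thus $\alpha\mapsto[M_\alpha]$ injects $\tau$ into the countable set of isomorphism classes, forcing $\tau$ to be a countable ordinal. The index sets $X,Y$ appearing in each layer are likewise countable, since the distinct matrix summands yield pairwise non-graded-isomorphic graded-simple modules (distinguished by their annihilators) and there are only countably many such classes. This yields (b).

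For (b)$\Rightarrow$(a) I would localize each graded-simple module to a single layer. Given a graded-simple $M$ with $LM=M$, the equality $L=\bigcup_{\alpha<\tau}I_\alpha$ produces a least $\beta$ with $I_\beta M\neq 0$; continuity at limits forces $\beta=\alpha+1$ to be a successor, so $I_\alpha M=0$ while $\bar S_\alpha:=I_{\alpha+1}/I_\alpha$ acts on $M$, viewed over $L/I_\alpha$, with $\bar S_\alpha M=M$. Since $\bar S_\alpha$ is graded-semisimple, hence idempotent, the restriction of $M$ is a nonzero graded $\bar S_\alpha$-module, and by the standard Clifford-type correspondence for idempotent ideals the passage $M\mapsto$ (a graded-simple $\bar S_\alpha$-submodule of $M$) is injective on isomorphism classes within layer $\alpha$. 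It then suffices to count graded-simple modules of $\bar S_\alpha=\bigoplus_\kappa S_\kappa$, a countable direct sum of the graded-simple rings $M_\Lambda(\mathbb{K})(\bar\delta)$ and $M_\Upsilon(\mathbb{K}[x^m,x^{-m}])(\bar\sigma)$. A graded-simple $\bar S_\alpha$-module is killed by all but one summand and is graded-simple over that single $S_\kappa$, and each such ring admits only countably many graded-simple modules up to graded isomorphism, namely the grading shifts of its column-type module ($\mathbb{Z}$-many in the $\mathbb{K}$ case, and finitely many in the $\mathbb{K}[x^m,x^{-m}]$ case because $x^m$ is a homogeneous unit). Hence each layer carries countably many classes, and summing over the countable ordinal $\tau$ gives at most countably many isomorphism classes of graded-simple $L$-modules, establishing (a).

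The hard part, I expect, is the transfinite termination argument in (a)$\Rightarrow$(b): one must verify that the graded socle of \emph{every} quotient $L/I_\alpha$ is nonzero, so that the chain genuinely grows and cannot stall strictly below $L$. This hinges on recognizing each such quotient as a Leavitt path algebra that inherits the countability hypothesis, so that Lemma \ref{NE cycle or line points} applies, and on the bookkeeping at limit ordinals needed to keep the quotients Leavitt path algebras and the chain smooth. The remaining delicate point is the Clifford-type correspondence in (b)$\Rightarrow$(a) for the idempotent ideals $\bar S_\alpha$; everything else reduces to the already-available structural results, chiefly Theorem \ref{graded socle} and Theorem \ref{Generalized Hazrat}.
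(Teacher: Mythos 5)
Your proposal is correct and takes essentially the same route as the paper: the transfinite graded-socle series built from Lemma \ref{NE cycle or line points}, Theorem \ref{graded socle} and the quotient-graph realization of $L/I_{\alpha}$ for (a)$\Rightarrow$(b), and the localization of each graded-simple module to a successor layer $I_{\alpha+1}/I_{\alpha}$ followed by counting graded-simple modules over the matrix summands for (b)$\Rightarrow$(a). Your annihilator-based injections forcing $\tau$, $X$, $Y$ to be countable, and the explicit count of shift classes over $M_{\Lambda}(\mathbb{K})(\bar{\delta})$ and $M_{\Upsilon}(\mathbb{K}[x^{m},x^{-m}])(\bar{\sigma})$, simply make precise steps the paper leaves implicit.
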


\begin{proof}
Assume (a). By Lemma \ref{NE cycle or line points}, the graph $E$ contains
line points and/or cycles without exits. Let $I_{1}$ be the ideal generated by
all the line points and vertices on all the cycles without exits in $E$. Then
$I_{1}=I(H,\emptyset)$, where $H=I_{1}\cap E^{0}$. By Theorem
\ref{graded socle}, $I_{1}$ is the graded socle of $L$ and is a graded direct
sum of at most countable number of matrices of the form $M_{\Lambda_{1}%
}(\mathbb{K})(\bar{\delta}_{1})$ and $M_{\Upsilon_{1}}(\mathbb{K}[x^{m_{1}%
},x^{-m_{1}}])(\bar{\sigma}_{1})$. Suppose we have defined a graded ideal
$I_{\alpha}=I(H_{\alpha},S_{\alpha})$ for some $\alpha\geq1$, where
$H_{\alpha}=I_{\alpha}\cap E^{0}$ and $S_{\alpha}=\{v\in B_{H_{\alpha}%
}:v^{H_{\alpha}}\in I_{\alpha}\}$. If $I_{\alpha}\neq L$, consider
$L/I_{\alpha}\cong L_{\mathbb{K}}(E\backslash(H_{\alpha},S_{\alpha})$. \ Now
$L/I_{\alpha}$ has at most countable number of non-isomorphic graded-simple
left $L$-modules. Hence, by Lemma \ref{NE cycle or line points},
$E\backslash(H_{\alpha},S_{\alpha})$ contains line points or vertices on
cycles without exits and the ideal generated by these vertices is
$\Soc_{gr}(L/I_{\alpha})$. Let $I_{\alpha+1}$ be the ideal of $L$ such that
$I_{\alpha+1}/I_{\alpha}=\Soc_{gr}(L/I_{\alpha})$. By Theorem
\ref{graded socle}, $I_{\alpha+1}/I_{\alpha}$ is a graded direct sum of at
most countable number of matrices of the form $M_{\Lambda_{\alpha}}%
(\mathbb{K})(\bar{\delta}_{\alpha})$ and $M_{\Upsilon_{\alpha}}(\mathbb{K}%
[x^{m_{1}},x^{-m_{1}}])(\bar{\sigma}_{\alpha})$. If $\gamma$ is a limit
ordinal and $I_{\alpha}$ has been defined for all $\alpha<\gamma$, then define
$I_{\gamma}=\cup_{\alpha\,\gamma}I_{\alpha}$. By transfinite induction, we
then obtain a smooth ascending chain (*) of graded ideals with the desired
properties and its union is $L$. Now each graded ideal is isomorphic to the
Leavitt path algebra of a suitable graph (see [15]) and so is a ring with
local units. Hence every graded left/right ideal of $I_{\alpha+1}$ is also a
graded left/right ideal of $L$. It is then readily seen that every
graded-simple left $I_{\alpha+1}$-module is isomorphic to a graded-simple left
$L$-module. Since $L$ has at most countable number of non-isomorphic
graded-simple left modules, the length of the chain $\tau$ is countable and
that, for each $\alpha<\tau$, $I_{\alpha+1}/I_{\alpha}$ is a graded direct sum
of at most countable number of matrices of the form $M_{\Lambda_{\alpha}%
}(\mathbb{K})(\bar{\delta}_{\alpha})$ and $M_{\Upsilon_{\alpha}}%
(\mathbb{K}[x^{m_{1}},x^{-m_{1}}])(\bar{\sigma}_{\alpha})$. This proves (b).

Assume (b), so that $L$ is the union of a chain of graded ideals satisfying
the stated properties. Let $S=L/M$ be a graded-simple module where $M$ is a
maximal graded left ideal of $L$. Let $\beta$ be the smallest ordinal such
that $I_{\beta}\nsubseteq M$. Clearly $\beta$ is a non-limit ordinal. Let
$\beta=\alpha+1$. Then $I_{\alpha}\subseteq M$. So,%
\[
S=(M+I_{\alpha+1})/M\cong_{gr}I_{\alpha+1}/(I_{\alpha+1}\cap M)\cong%
_{gr}(I_{\alpha+1}/I_{\alpha})/[(I_{\alpha+1}\cap M)/I_{\alpha}].
\]
Thus every graded-simple left $L$-module is isomorphic to a graded-simple left
module over the ring $I_{\alpha+1}/I_{\alpha}$ for some $\alpha$ with
$0\leq\alpha<\tau$. Now for each $\alpha<\tau$, $I_{\alpha+1}/I_{\alpha}$ has
at most countably many non-isomorphic graded simple left modules, due to the
fact that $I_{\alpha+1}/I_{\alpha}$ is a graded direct sum of at most
countably many matrix rings $M_{\Lambda_{\alpha}}(\mathbb{K})(\bar{\delta
}_{\alpha})$ and $M_{\Upsilon_{\alpha}}(\mathbb{K}[x^{m_{1}},x^{-m_{1}}%
])(\bar{\sigma}_{\alpha})$, where $\Lambda_{i},\Upsilon_{j}$ are suitable
index set. Since $\tau$ is a countable ordinal, it is readily seen that $L$
has at most countable number of non-isomorphic graded-simple left $L$-modules.
This proves (a).
\end{proof}

\section{Examples}

\noindent In this section, we illustrate the main results of this paper, by constructing
several graphs $E$ for which the corresponding Leavitt path algebra $L_{\mathbb K}(E)$
satisfies the desired property.

\begin{example} \rm
Let $G_{1}$ be the graph
\[\begin{tikzcd}
	{v_{11}} & {v_{12}} & {v_{13}}
	\arrow[from=1-1, to=1-2]
	\arrow[from=1-2, to=1-3]
	\arrow[from=1-3, to=1-3, loop, in=55, out=125, distance=10mm]
\end{tikzcd}\]
$G_{1}$ contains no line points , but contains a single loop
$c$ (without exits) at the vertex $v_{13}$. Now $v_{11},v_{12},v_{13}$ are all
Laurent vertices and $(G_{1})^{0}$ is the hereditary saturated closure of
$\{v_{13}\}$. There are three paths ending at $v_{13}$\ that do not go through
the loop \ $c$ and have length $0,1,2$ respectively. Hence, $L_{\mathbb K}(G_{1}%
)\cong_{gr}M_{3}(\mathbb K[x,x^{-1}](0,1,2)$. By Theorem \ref{Only one graded-simple},
all graded-simple modules over $L_{\mathbb K}(G_{1})$ are graded isomorphic to the
graded-simple module $\mathbb K[x,x^{-1}]$ and are faithful $L_{\mathbb K}(G_{1})$-modules.
Thus $L_{\mathbb K}(G_{1})$ is a graded direct sum of faithful graded-simple left
$L_{\mathbb K}(G_{1})$-modules.

\end{example}

\begin{example} \rm 
 Let $G_{2}$ be the graph
\[\begin{tikzcd}
	{v_{11}} & {v_{12}} & {v_{13}} \\
	{v_{21}} & {v_{22}} & {v_{23}}
	\arrow[from=1-1, to=1-2]
	\arrow[from=1-2, to=1-3]
	\arrow[from=1-3, to=1-3, loop, in=55, out=125, distance=10mm]
	\arrow[from=2-1, to=1-1]
	\arrow[from=2-1, to=2-2]
	\arrow[from=2-2, to=1-1]
	\arrow[from=2-2, to=2-3]
	\arrow[from=2-3, to=1-1]
	\arrow[from=2-3, to=2-3, loop, in=55, out=125, distance=10mm]
\end{tikzcd}\]

Now $G_{2}$
contains no line points and has a single loop without
exits and the Laurent vertices in $G_{2}$ are the vertices in the first row,
namely, $v_{11},v_{12},v_{13}$. $H_{1}=\{v_{11},v_{12},v_{13}\}$ is a
hereditary saturated subset of $(G_{2})^{0}$. The graded ideal $<H_{1}>=S_{1}$,
the graded socle of $L_{\mathbb K}(G_{2})$ and it is clear from the explanation of
$L_{\mathbb K}(G_{1})$, that $S_{1}$ is a graded direct sum of isomorphic faithful
graded-simple $L_{\mathbb K}(G_{2})$-modules . Now the quotient graph $G_{2}\backslash
H_{1}=G_{1}$ and so $L_{\mathbb K}(G_{2})/S_{1}\cong_{gr}L_{\mathbb K}(G_{1})$. It is then
clear that $L_{\mathbb K}(G_{2})/S_{1}$ is a graded direct sum of graded-isomorphic
graded simple modules annihilated by the ideal $S_{1}$. Thus $L_{\mathbb K}(G_{2})$
has exactly two distinct graded-isomorphism classes of graded-simple left
modules. 

\end{example}

\begin{example} \rm

Let $G_{3}$ be the graph
\[\begin{tikzcd}
	{v_{11}} && {v_{12}} && {v_{13}} \\
	{v_{21}} && {v_{22}} && {v_{23}} \\
	\\
	{v_{31}} && {v_{32}} && {v_{33}}
	\arrow[from=1-1, to=1-3]
	\arrow[from=1-3, to=1-5]
	\arrow[from=1-5, to=1-5, loop, in=55, out=125, distance=10mm]
	\arrow[from=2-1, to=1-1]
	\arrow[from=2-1, to=2-3]
	\arrow[from=2-3, to=1-1]
	\arrow[from=2-3, to=2-5]
	\arrow[from=2-5, to=1-1]
	\arrow[from=2-5, to=2-5, loop, in=55, out=125, distance=10mm]
	\arrow[from=4-1, to=2-1]
	\arrow[from=4-1, to=4-3]
	\arrow[from=4-3, to=2-1]
	\arrow[from=4-3, to=4-5]
	\arrow[from=4-5, to=1-1]
	\arrow[from=4-5, to=4-5, loop, in=55, out=125, distance=10mm]
\end{tikzcd}\]

As before the Laurent vertices in $G_{3}$ are the vertices  $v_{11},v_{12},v_{13}$  on the
first row and they generate the graded socle $S_{1}$ which is a graded direct
sum of  graded-isomorphic faithful graded-simple $L_{\mathbb K}(G_{3})$-modules and
$L_{\mathbb K}(G_{3})/S_{1}\cong_{gr}L_{\mathbb K}(G_{2})$. Let $S_{2}$ be the graded ideal
containing $S_{1}$ such that $S_{2}/S_{1}\cong_{gr}Soc_{gr}(L_{\mathbb K}(G_{2}))$,
the graded-socle of $L_{\mathbb K}(G_{2})$. Using the fact every graded $(L_{\mathbb K}%
(G_{3})/S_{1})$-module is a graded $L_{\mathbb K}(G_{3})$-module annihilated by
$S_{1}$, we conclude that $S_{2}/S_{1}$ a graded-direct sum of
graded-isomorphic graded-simple $L_{\mathbb K}(G_{3})$-module annihilated by $S_{1}$.
Also
\[
L_{\mathbb K}(G_{3})/S_{2}\cong_{gr}(L_{\mathbb K}(G_{3})/S_{1})/(S_{2}/S_{1})\cong_{gr}%
(L_{\mathbb K}(G_{2})/Soc_{gr}(L_{\mathbb K}(G_{2}))\cong_{gr}L_{\mathbb K}(G_{1})
\]
is a graded direct sum graded-isomorphic graded-simple left  $L_{\mathbb K}(G_{3}%
)$-modules annihilated by the ideal $S_{2}$. Thus $L_{\mathbb K}(G_{3})$ has exactly
three distinct graded-isomorphism classes of graded-simple left modules.
\end{example}

\begin{example} \rm
Proceeding as above in the previous examples, we can construct, by simple induction, a graph $G_{n}$
for every $n\geq1$ such that $L_{\mathbb K}(G_{n})$ has exactly $n$ graded isomorphism
classes of graded-simple left $L_{\mathbb K}(E)$-modules. Observing that, for all
$i\geq1$,  we have a natural embedding of $G_{i}$ into $G_{i+1}$, we obtain a
chain of graphs
\[
G_{1}\subseteq G_{2}\subseteq\cdot\cdot\cdot\subseteq G_{n}\subseteq
\cdot\cdot\cdot.
\]
If we set  $G_{\omega}=\cup_{n\geq1}G_{n}$, then $L_{\mathbb K}(G_{\omega})$ will have
exactly countable number of distinct graded isomorphism classes of
graded-simple left $L_{\mathbb K}(G_{\omega})$-modules.
\end{example}

\begin{example} \rm
Now the graph $G_{\omega}$ does not contain any line points. One can modify
the graph $G_{\omega}$ to obtain an infinite graph $H_{\omega}$ containing
line points by replacing each odd numbered row in $G_{\omega}$ by $%
\begin{array}
[c]{ccccc}%
\bullet_{{}} & \longrightarrow & \bullet_{{}} & \longrightarrow &
\bullet\longrightarrow\cdot\cdot\cdot
\end{array}
$ as given below:
\[\begin{tikzcd}
	{v_{11}} & {v_{12}} & {v_{13}} & {v_{14}} & \bullet & {....} \\
	{v_{21}} & {v_{22}} & {v_{23}} \\
	{v_{31}} & {v_{32}} & {v_{33}} & {v_{34}} & \bullet & {...} \\
	{v_{41}} & {v_{42}} & {v_{43}} \\
	{:} & {:} & {:} & {:} \\
	{:} & {:} & {:} & {:} \\
	{:} & {:} & {:} & {:}
	\arrow[from=1-1, to=1-2]
	\arrow[from=1-2, to=1-3]
	\arrow[from=1-3, to=1-4]
	\arrow[from=1-4, to=1-5]
	\arrow[from=1-5, to=1-6]
	\arrow[from=2-1, to=1-1]
	\arrow[from=2-1, to=2-2]
	\arrow[from=2-2, to=1-1]
	\arrow[from=2-2, to=2-3]
	\arrow[from=2-3, to=1-1]
	\arrow[from=2-3, to=2-3, loop, in=55, out=125, distance=10mm]
	\arrow[from=3-1, to=2-1]
	\arrow[from=3-1, to=3-2]
	\arrow[from=3-2, to=2-1]
	\arrow[from=3-2, to=3-3]
	\arrow[from=3-3, to=2-1]
	\arrow[from=3-3, to=3-4]
	\arrow[from=3-4, to=2-1]
	\arrow[from=3-4, to=3-5]
	\arrow[from=3-5, to=3-6]
	\arrow[from=4-1, to=3-1]
	\arrow[from=4-1, to=4-2]
	\arrow[from=4-2, to=3-1]
	\arrow[from=4-2, to=4-3]
	\arrow[from=4-3, to=3-1]
	\arrow[from=4-3, to=4-3, loop, in=55, out=125, distance=10mm]
\end{tikzcd}\]

Again, $L_{\mathbb K}(H_{\omega})$ has exactly
countably many graded-simple left $L_{\mathbb K}(H_{\omega})$-modules no two of
which are graded-isomorphic.
\end{example}

\begin{example} \rm
By transfinite induction, one can then construct, for each infinite ordinal
$\alpha$, the graphs $G_{\alpha},H_{\alpha}$ such that the corresponding
Leavitt path algebras  $L_{\mathbb K}(G_{\alpha}),L_{\mathbb K}(H_{\alpha})$ each possesses
exactly $\alpha$ number of graded-isomorphism classes of graded-simple left modules.
\end{example}

\section{Naimark's Problem For Associative Algebras}

\noindent In general, the algebraic version of Naimark's problem has a
negative answer for arbitrary associative algebras over a field, as justified
by the following example, by J.H. Cozzens, of a countable dimensional algebra
$A$ over a field which is a domain but all the simple left $A$-modules are isomorphic.

\begin{example}
\label{Cozzens}(J.H. Cozzens \cite{Cozzens} ). Let $\mathbb{K}$ be a field
with derivation $D$ which is a ``universal differential field" in the sense
that the equation
\[
p(x,D(x),\cdot\cdot\cdot,D^{n}(x))=0
\]
has a solution $\xi\in\mathbb{K}$ \ for all non-constant polynomials
$p(x)\in\mathbb{K}[x]$. Let $A=\mathbb{K}[x,D]$ be the algebra of differential
polynomials over $\mathbb{K}$. Theorem 1.4, \cite{Cozzens} states that $A$ has
exactly one isomorphism class of simple left modules and that $A$ is a
principal left/right ideal domain which is not a division ring (and, in
particular, not von Neumann regular). The fact that $A$ is a domain which is
not a division ring implies that $A$ cannot be isomorphic to a subalgebra of
linear transformations of a vector space over a field.
\end{example}

The next Proposition gives a necessary condition and a partial converse for an
associative algebra $A$ to have a unique simple module up to isomorphism.

\begin{proposition}
\label{AA}Suppose $A$ is a unital associative algebra over a field
$\mathbb{K}$.

(a) If $A$ has, up to isomorphism, exactly one simple left (right) $A$-module
$S$, then the Jacobson radical $J(A)$ is the unique maximal ideal of $A$
containing all the other ideals of $A$. In this case, either (i) $A/J(A)\cong
M_{n}(D)$, the matrix ring of $n\times n$ matrices over the division ring
$D\cong End_{A}(S)$ for some $n\geq1$, or\ (ii) $A/J(A)$ is a unital simple
ring with $\Soc(A/J(A))=0$.

(b) Converse of (a)(i): If $A/J(A)\cong M_{n}(D)$, the matrix ring of $n\times
n$ matrices over a division ring $D$, then $A$ will have exactly one simple
left/right module up to isomorphism.
\end{proposition}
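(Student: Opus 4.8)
The plan is to read off everything from the two facts that, when $A$ has a unique simple module $S$, the Jacobson radical is forced to equal the annihilator of $S$ and to sit at the very top of the ideal lattice. I would treat (a) first and then deduce the partial converse (b) essentially for free.

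For part (a), recall that $J(A)$ is the intersection of the annihilators of all simple left $A$-modules; since $S$ is, up to isomorphism, the only one, this gives $J(A)=\operatorname{Ann}_A(S)$, a proper ideal because $A$ is unital. Now take any proper ideal $I$: the nonzero unital ring $A/I$ has a maximal left ideal and hence a simple left $A/I$-module $T$, which regarded over $A$ is simple and killed by $I$; thus $T\cong S$ and $I\subseteq\operatorname{Ann}_A(T)=\operatorname{Ann}_A(S)=J(A)$. Hence $J(A)$ contains every proper ideal, so it is the unique maximal ideal of $A$ and, in particular, a maximal two-sided ideal.

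Next I pass to $\bar A:=A/J(A)$. Since the ideals of $\bar A$ correspond to those of $A$ containing the maximal ideal $J(A)$, the ring $\bar A$ is simple and unital, with $J(\bar A)=0$ and $S$ a faithful simple $\bar A$-module; by Schur's Lemma $D:=\operatorname{End}_{\bar A}(S)=\operatorname{End}_A(S)$ is a division ring. Now $\Soc(\bar A)$ is a two-sided ideal of the simple ring $\bar A$, so it is $0$ or all of $\bar A$. If $\Soc(\bar A)=\bar A$, then writing $1=\sum_{i=1}^{k}e_i$ with each $e_i$ in a minimal left ideal $L_i$ shows $\bar A=\sum_{i=1}^{k}L_i$ is a finite sum of simple left modules, hence semisimple of finite length; being simple, the Jacobson density theorem together with the Artin–Wedderburn theorem gives $\bar A\cong M_n(D)$, which is case (i). Otherwise $\Soc(\bar A)=0$, and $\bar A$ is a unital simple ring with zero socle, which is case (ii).

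For the partial converse (b), I would observe that every simple left $A$-module is annihilated by $J(A)$ and so is a simple $A/J(A)$-module, and conversely; thus the isomorphism classes of simple left $A$-modules coincide with those of $A/J(A)\cong M_n(D)$, and $M_n(D)$ has exactly one simple left module (the column space $D^{\,n}$). The same argument applied to $M_n(D)^{\mathrm{op}}\cong M_n(D^{\mathrm{op}})$ settles the right modules. The only genuinely delicate point in the whole argument is the socle dichotomy in (a): one must verify that a \emph{nonzero} socle in the simple \emph{unital} ring $\bar A$ forces finite length (this is exactly where unitality of $A$ is used, via $1$ lying in a finite sum of minimal left ideals), so that Artin–Wedderburn applies and one lands cleanly in case (i) rather than case (ii); keeping track of the left/right side convention in the identification $D\cong\operatorname{End}_A(S)$ is the only other thing requiring care.
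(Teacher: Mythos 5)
Your proposal is correct and takes essentially the same route as the paper: identify $J(A)$ with $\operatorname{Ann}_A(S)$ via the uniqueness of $S$, show every proper ideal lies in $J(A)$ by producing a simple module over $A/I$, and then run the socle dichotomy on the simple ring $A/J(A)$, where a nonzero socle plus unitality forces $A/J(A)=\Soc(A/J(A))\cong M_n(D)$ with $D\cong\operatorname{End}_A(S)$. The details you add --- writing $1$ as a finite sum of elements of minimal left ideals, and handling right modules via $M_n(D)^{\mathrm{op}}\cong M_n(D^{\mathrm{op}})$ --- only make explicit what the paper leaves implicit.
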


\begin{proof}
(a) Now $A$ has exactly one primitive ideal $M$ being the annihilator of the
unique simple left (right) $A$-module $S$ and hence $M=J(A)$, the Jacobson
radical of $A$. We claim that every two-sided ideal $I$ of $A$ must be
contained in $J(A)$. Indeed, if $I\nsubseteq J(A)$, then a simple left/right
module $T$ over the unital ring $A/I$ will be a simple $A$-module not
isomorphic to $S$, since the primitive ideal $ann(_{A}T)\supseteq I$ and so
$ann(_{A}T)\neq J(A)=ann(_{A}S)$, a contradiction. In particular, $J(A)$ must
be a unique maximal ideal of $A$. If $\Soc(A/J(A))\neq0$, then, by simplicity,
$A/J(A)=\Soc(A/J(A))$ and will be a direct sum of finitely many (as $A/J(A)$
is unital) isomorphic simple left modules. Consequently, $A/J(A)\cong
M_{n}(D)$, the ring of $n\times n$ matrices over the division ring $D\cong
End_{A}(S)$. Otherwise, $A/J(A)$ is a unital simple ring with $Soc(A/J(A))=0$.

(b) The proof follows from the fact that all the simple left (right) modules
over $M_{n}(D)\cong A/J(A)$ are isomorphic, that $J(A)$ is the unique
primitive ideal of $A$ and that the simple left (right) $A$-modules are
precisely the simple left (right) $A/J(A)$-modules.
\end{proof}

\begin{remark}
In general, the converse of Proposition \ref{AA} does not hold. If
$\mathbb{K}$ is a field of characteristic $0$, then the Weyl algebra
$A_{1}(\mathbb{K}) $ is a simple ring with zero socle, but there are
infinitely many non-isomorphic simple left modules over $A_{1}(\mathbb{K})$.
Specifically, recalling that $R=A_{1}(\mathbb{K})$ is the $\mathbb{K}$-algebra
generated by $x,y$ subject to the relation $xy-yx=1$, one shows that, for each
non-zero polynomial $p(y)\in\mathbb{K}[y]$, the left ideal $R(x-p(y))$ is
maximal giving rise to a simple left $R$-module $S=R/(R(x-p(y)))$. (See
Corollary 3.17, Exercise 3.18, \cite{L}).
\end{remark}

\begin{corollary}
\label{Countable Algebra} Suppose $A$ is a unital countable dimensional
$\mathbb{K}$-algebra in which every non-zero one-sided ideal contains a
non-zero idempotent. Then the following are equivalent:

(a) Any two simple left $A$-modules are isomorphic;

(b) $A\cong M_{n}(D)$ where $D$ is a division ring and $n\geq1$.
\end{corollary}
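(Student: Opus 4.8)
The plan is to prove the two implications separately, with essentially all of the content in $(a)\Rightarrow(b)$. For $(b)\Rightarrow(a)$: if $A\cong M_{n}(D)$ then $J(A)=0$, so $A/J(A)\cong M_{n}(D)$ and Proposition \ref{AA}(b) immediately gives that any two simple left $A$-modules are isomorphic. This direction needs nothing beyond what is already in the excerpt.

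For $(a)\Rightarrow(b)$ I would first dispose of two easy reductions. Since $A$ is unital, simple left modules exist (take $A/M$ for a maximal left ideal $M$), so hypothesis (a) says $A$ has exactly one isomorphism class of simple left modules and Proposition \ref{AA}(a) applies. Next I would show $J(A)=0$: the Jacobson radical contains no nonzero idempotent, because if $e=e^{2}\in J(A)$ then $1-e$ is a unit and right–multiplying $e(1-e)=0$ by $(1-e)^{-1}$ forces $e=0$; hence if $J(A)\neq 0$ it would be a nonzero one–sided ideal with no nonzero idempotent, contradicting the hypothesis. With $J(A)=0$, Proposition \ref{AA}(a) says that $0$ is the unique maximal ideal containing every ideal, i.e. $A$ is simple, and moreover either $A\cong M_{n}(D)$ (and we are done) or $A$ is a simple ring with $\Soc(A)=0$. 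Thus the entire problem reduces to excluding the zero–socle case.

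To set up the exclusion, note that the unique simple module $S$ is faithful (its annihilator is $J(A)=0$), so $A$ is primitive; writing $D=\mathrm{End}_{A}(S)$, the Jacobson density theorem realizes $A$ as a dense ring of $D$-linear transformations of $S$. Because $S$ is cyclic we have $\dim_{\mathbb{K}}S\le\dim_{\mathbb{K}}A=\aleph_{0}$, and since $\mathbb{K}\subseteq D$ this forces $\dim_{D}S\le\aleph_{0}$. If $\dim_{D}S=n<\infty$, density yields $A=\mathrm{End}_{D}(S)\cong M_{n}(D)$ and we are finished. So the whole theorem comes down to ruling out $\dim_{D}S=\aleph_{0}$. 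In that case $\Soc(A)=0$ is equivalent to $A$ containing no nonzero finite–rank transformation, so every nonzero idempotent of $A$ is a projection of infinite rank, and the claim to be proved is that such a countable–dimensional dense ring cannot satisfy the idempotent hypothesis.

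The \textbf{main obstacle} is precisely this last point, and it is the only place where countable dimension is genuinely used. The natural left ideals—the left annihilators $\{a\in A:aU=0\}$ of $D$-subspaces $U$—always contain idempotent projections, so they are useless; an idempotent–free left ideal must exploit that $A$ is only countable–dimensional inside the uncountable–dimensional ring $\mathrm{End}_{D}(S)$. My plan is to fix a $D$-basis $v_{0},v_{1},\dots$ of $S$ together with a $\mathbb{K}$-basis enumeration of $A$, and to run an $\omega$-step construction producing a single nonzero left ideal $L\subseteq A$ every nonzero element of which acts as an injective, eventually shift–like operator with no eigenvector for $1$, so that no nonzero element of $L$ can be idempotent; this contradicts the hypothesis and kills the case $\dim_{D}S=\aleph_{0}$. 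The delicate part is that $A$ is not closed under the limits to which density approximations converge, so the construction must remain inside $A$ at every finite stage while still forcing the global injectivity. That countability is essential here is confirmed by the uncountable–dimensional quotient $\mathrm{End}_{D}(V)/F$ of $\mathrm{End}_{D}(V)$ by the ideal $F$ of finite–rank operators (for $\dim_{D}V=\aleph_{0}$), which is simple, von Neumann regular—so every nonzero one–sided ideal does contain a nonzero idempotent—and yet has zero socle. Once the zero–socle case is excluded, $(a)\Rightarrow(b)$ follows and the equivalence is complete.
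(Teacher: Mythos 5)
Your reductions are correct and in fact track the paper's own proof: $J(A)=0$ because the radical contains no nonzero idempotent, then Proposition \ref{AA}(a) leaves exactly the two alternatives $A\cong M_{n}(D)$ or $A$ simple with $\Soc(A)=0$, and the density argument disposing of the finite $\dim_{D}S$ case is sound (as is (b)$\implies$(a)). But the entire content of the corollary is the exclusion of the zero-socle case, and there you have only a plan, not a proof: the $\omega$-step construction of a nonzero left ideal $L$ all of whose nonzero elements are ``injective, eventually shift-like'' is never carried out, and the delicate point you yourself flag (staying inside $A$ at every stage while forcing a global injectivity property in the limit) is exactly where such constructions typically fail.

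Worse, the plan as stated cannot work, because your reduced claim --- that a countable-dimensional dense ring of infinite-rank transformations with zero socle cannot satisfy the idempotent hypothesis --- is false. Take $A=\varinjlim M_{2^{n}}(\mathbb{K})$: it is unital, simple, of countable $\mathbb{K}$-dimension, von Neumann regular (so \emph{every} nonzero one-sided ideal contains a nonzero idempotent), yet $\Soc(A)=0$ since $A$ is not artinian. Being simple and unital it is primitive, hence acts densely on a simple module $S$ with $\dim_{D}S=\aleph_{0}$; your construction, which uses only countability, density and zero socle, would apply verbatim to this $A$ and produce an idempotent-free left ideal that does not exist. So the uniqueness of the simple module must enter the argument in an essential way, and your sketch never uses it. This is precisely the gap the paper fills by citing Rosenberg (\cite{Rosenberg}, Theorem 3): a countable-dimensional simple algebra with zero socle has infinitely many pairwise non-isomorphic simple left modules, contradicting (a) directly rather than contradicting the idempotent hypothesis. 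Either quote that theorem, as the paper does, or reproduce its proof; an idempotent-theoretic shortcut of the kind you propose is blocked by the example above.
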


\begin{proof}
Assume (a). Since every non-zero one-sided ideal of $A$ contains a non-zero
idempotent, the Jacobson radical $J(A)=0$. Hence, by Proposition \ref{AA}, $A
$ is a simple ring such that either $\Soc(A)=0$ or $A\cong M_{n}(D)$ where $D
$ is a division ring and $n\geq1$. If $\Soc(A)=0$, then, Rosenberg has shown
in (\cite{Rosenberg}, Theorem 3) that $A$ will have infinitely many
non-isomorphic simple left modules. Hence $A\cong M_{n}(D)$. This proves (b).

(b)$\implies$(a) is well-known (see \cite{L}).
\end{proof}

\textbf{Remark}: We do not know if the above corollary holds if the algebra
$A$ has uncountable $\mathbb{K}$-dimension.

\begin{proposition}
Suppose $\mathbb{K}$ is a field and $\Gamma$ is a group. Suppose $A$ is a
unital $\Gamma$-graded associative $\mathbb{K}$-algebra having exactly one
graded-isomorphism class of graded-simple left $A$-modules. Then either
$A/J_{gr}(A)\cong_{gr}M_{n}(K)(\bar{\sigma})$ where $\bar{\sigma}=(\sigma
_{1},\cdot\cdot\cdot,\sigma_{n})\in\Gamma^{n}$ or $A/J_{gr}(A)$ is a
graded-simple ring with $\Soc_{gr}(A/J_{gr}(A))=0$.
\end{proposition}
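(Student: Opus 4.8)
The plan is to transport the proof of Proposition \ref{AA}(a) into the graded category, using the graded Jacobson radical in place of $J(A)$ and the graded socle in place of $\Soc$. Fix a representative $S$ of the unique graded-isomorphism class of graded-simple left $A$-modules; such modules exist by the Zorn's-lemma argument recalled at the start of Section 4 (a maximal graded left ideal $M$ of $A$ produces the graded-simple module $A/M$). First I would identify the graded Jacobson radical: since every graded-simple left $A$-module is graded-isomorphic to $S$, it has the same annihilator $\operatorname{ann}_A(S)$, and by Lemma 1.7.4 of \cite{NO} the radical $J_{gr}(A)$ is the intersection of the annihilators of all graded-simple left $A$-modules. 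Hence $J_{gr}(A)=\operatorname{ann}_A(S)$ is the unique graded-primitive ideal of $A$.

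Next I would show that $J_{gr}(A)$ is the unique maximal graded ideal and contains every proper graded ideal, so that $\bar A:=A/J_{gr}(A)$ is graded-simple. If some graded two-sided ideal $I$ satisfied $I\nsubseteq J_{gr}(A)$, then $A/I$ would be a nonzero unital $\Gamma$-graded algebra and would carry a graded-simple left module $T$ (again via a maximal graded left ideal of $A/I$). Viewed over $A$, the module $T$ is graded-simple with $I\subseteq\operatorname{ann}_A(T)$; but $T\cong_{gr}S$ forces $\operatorname{ann}_A(T)=J_{gr}(A)$, whence $I\subseteq J_{gr}(A)$, a contradiction. Thus $\bar A$ has no nonzero proper graded ideals. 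Now I split on the graded socle: $\Soc_{gr}(\bar A)$ is a graded two-sided ideal of $\bar A$, so graded-simplicity gives either $\Soc_{gr}(\bar A)=0$, which is the second alternative, or $\Soc_{gr}(\bar A)=\bar A$.

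In the latter case $\bar A$ is graded-semisimple, and since it is unital the identity lies in a finite sub-sum of graded-simple left ideals, so $\bar A$ is a finite graded direct sum of graded-simple left ideals; by hypothesis these are all graded-isomorphic to $S$. By the graded Schur lemma the endomorphism ring $D:=\operatorname{End}_A(S)$ is a graded division ring, and writing $\bar A\cong_{gr}\operatorname{End}_D(\bar A)$ for the graded free $D$-module $\bar A$ with a homogeneous basis $b_1,\dots,b_n$ of degrees $\sigma_1,\dots,\sigma_n$, Lemma \ref{RepresentFinite matrix} identifies this with the graded matrix ring $M_n(D)(\bar\sigma)$, $\bar\sigma=(\sigma_1,\dots,\sigma_n)\in\Gamma^n$; this is the first alternative (the coefficient graded division ring playing the role of the $K$ in the statement). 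The step I expect to be the main obstacle is precisely this last one: assembling the graded Wedderburn--Artin isomorphism with the correct degree-shift vector $\bar\sigma$, i.e. checking that a unital graded-simple, graded-semisimple ring whose graded-simple constituents are all graded-isomorphic is graded-isomorphic to a \emph{single} graded matrix block, with the shifts read off from a homogeneous basis via Lemma \ref{RepresentFinite matrix}. The bookkeeping of homogeneous degrees, rather than a mere abstract isomorphism, is what makes the graded version more delicate than its ungraded counterpart.
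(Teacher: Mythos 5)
Your proposal follows essentially the same route as the paper's proof: identify $J_{gr}(A)$ with the common annihilator of the (single class of) graded-simple left modules via Lemma 1.7.4 of \cite{NO}, show that any graded ideal $I\nsubseteq J_{gr}(A)$ would yield a graded-simple module over $A/I$ whose annihilator differs from $J_{gr}(A)$, conclude that $A/J_{gr}(A)$ is graded-simple, and then dichotomize on the graded socle. The only divergence is the final step: where you re-derive the graded Wedderburn--Artin decomposition by hand (graded Schur lemma plus Lemma \ref{RepresentFinite matrix}), the paper simply cites Theorem I.5.8 of \cite{NO}, exactly as its ungraded counterpart Proposition \ref{AA} cites the classical fact.

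Two remarks on that last step. First, a small slip in your bookkeeping: the graded Wedderburn isomorphism is $\bar A\cong_{gr}\operatorname{End}_D(S)$, where $S$ is graded free of finite rank over the graded division ring $D=\operatorname{End}_{\bar A}(S)$ (every graded module over a graded division ring is graded free), not $\operatorname{End}_D(\bar A)$; with that correction, Lemma \ref{RepresentFinite matrix} does produce $M_n(D)(\bar\sigma)$ with $\bar\sigma$ read off a homogeneous $D$-basis of $S$. Second, and more substantively: what your argument honestly delivers is $M_n(D)(\bar\sigma)$ for a graded division $\mathbb{K}$-algebra $D$, not $M_n(\mathbb{K})(\bar\sigma)$ as the proposition literally asserts, and your parenthetical that the graded division ring ``plays the role of $K$'' glosses over a real gap between argument and statement. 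Indeed the literal statement fails: $A=\mathbb{K}[x,x^{-1}]$ is a unital $\mathbb{Z}$-graded algebra with exactly one graded-isomorphism class of graded-simple modules (all shifts $A(n)$ are graded-isomorphic to $A$ via multiplication by the homogeneous unit $x^n$), with $J_{gr}(A)=0$ and $\Soc_{gr}(A)=A\neq 0$, yet $A\not\cong_{gr}M_n(\mathbb{K})(\bar\sigma)$ for any $n$ and $\bar\sigma$ on dimension grounds. Since the paper's own citation of Theorem I.5.8 likewise only yields coefficients in a graded division ring, this is a defect of the stated proposition (the correct coefficient rings being graded division algebras such as $\mathbb{K}$ and $\mathbb{K}[x^m,x^{-m}]$, consistent with Theorem \ref{graded socle}), not of your proof; your argument is as complete as the paper's, and in fact makes the needed correction visible.
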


\begin{proof}
The proof follows the same \ argument used in the proof of Proposition
\ref{AA}. Since all the graded-simple left $A$-modules are isomorphic, there
can be only one annihilating ideal of the graded simple left modules in $A$.
Now the graded Jacobson radical $J_{gr}(A)$ is the intersection of all the
annihilators of the graded-simple left $A$-modules (Lemma 1.7.4 \cite{NO}).
Hence $J_{gr}(A)$ is the only annihilator of graded-simple left $A$-modules.
We claim that $J_{gr}(A)$ contains every proper graded ideal of $A$. Indeed,
if $I$ is a non-zero graded ideal with $I\nsubseteq J_{gr}(A)$, then a
graded-simple left module\ over the quotient algebra $A/I$ will also be a
graded-simple left $A$-module whose annihilating ideal $Q$ contains $I$ and
$Q\neq J_{gr}(A)$. This contradiction shows that $J_{gr}(A)$ is a maximal
graded ideal containing every proper graded ideal of $A$. So $A/J_{gr}(A)$ is
a graded-simple ring. If the graded-socle $\Soc_{gr}(A/J_{gr}(A))\neq0$, then
$A/J_{gr}(A)\cong_{gr}M_{n}(K)(\bar{\sigma})$, being graded-simple and
graded-semisimple (Theorem I.5.8, \cite{NO}), where $\bar{\sigma}=(\sigma
_{1},\cdot\cdot\cdot,\sigma_{n})\in\Gamma^{n}$. Otherwise, $A/J_{gr}(A)$ is a
graded-simple ring with $\Soc_{gr}(A/J_{gr}(A))=0$.
\end{proof}

\bigskip

\end{document}